\numberwithin{equation}{section}
\theoremstyle{plain}
\newtheorem{thm}{Theorem}[section]
\newtheorem{cor}[thm]{Corollary}
\newtheorem{lemma}[thm]{Lemma}
\theoremstyle{definition}
\newtheorem{deff}[thm]{Definition}
\newtheorem{example}[thm]{Example}
\theoremstyle{remark}
\newtheorem{rmk}[thm]{\bf Remark}
\def\mG{\mathcal{G}}
\def\N{\mathbb N}
\def\r{\rangle}
\def\O{\mathcal{O}}
\def \Z{\mathbb Z}
\def\-{\text{-}}
\def\TT{\mathcal{T}}
\def\LL{\mathcal{L}}
\def\FF{\mathcal{F}}
\newcommand{\End}{\operatorname{End}}
\newcommand{\ann}{\operatorname{ann}}
\newcommand{\op}{\operatorname{op}}
\begin{document}

\title{$\Z$-graded rings as Cuntz--Pimsner rings}

\author {Lisa Orloff Clark}
\address {Lisa Orloff Clark\\School of Mathematics and Statistics, Victoria University of Wellington, PO Box 600, Wellington 6140, New Zealand}
\email {lisa.clark@vuw.ac.nz}

\author {James Fletcher}
\address {James Fletcher\\School of Mathematics and Statistics, Victoria University of Wellington, PO Box 600, Wellington 6140, New Zealand}
\email {james.fletcher@vuw.ac.nz}

\author{Roozbeh Hazrat}
\address{Roozbeh Hazrat\\
Centre for Research in Mathematics, Western Sydney University, Australia}
\email{r.hazrat@westernsydney.edu.au}

\author{Huanhuan Li}
\address{Huanhuan Li\\
Centre for Research in Mathematics, Western Sydney University, Australia}
\email{h.li@westernsydney.edu.au}

\thanks {Clark and Fletcher were supported by Marsden grant 15-UOO-071 from the Royal Society of New Zealand. Hazrat and Li acknowledge Australian Research Council grant DP160101481.}

\subjclass[2010]{}

\keywords{}

\date{\today}

\begin{abstract} 
Given a $\Z$-graded ring $A$ and a subring $R\subseteq A$, it is natural to ask whether $A$ can be realised as the Cuntz--Pimsner ring of some $R$-system. In this paper, we derive sufficient conditions on $A$ and $R$ for this to be the case. As an application, we give conditions under which the Steinberg algebra $A_K(\mathcal{G})$ associated to a $\Z$-graded groupoid $\mathcal G=\sqcup_{n\in \Z} \mG_n$ can be realised as the Cuntz--Pimsner ring of an $A_K(\mathcal{G}_0)$-system.
\end{abstract}

\maketitle

\section{Introduction}

Cuntz--Pimsner rings were introduced by Carlsen and Ortega in \cite{CO11} as an algebraic analogue of Cuntz--Pimsner $C^{\ast}$-algebras (first introduced by Pimsner \cite{Pimsner} and later refined by Katsura \cite{Katsura}).  To construct a Cuntz--Pimsner algebra one requires a $C^*$-algebra $A$ (often called the coefficient algebra) along with a $C^*$-correspondence $X$ over $A$ (loosely speaking, this can be thought of as a Hilbert space where the inner product takes values in a $A$ rather than just the complex numbers). One then constructs the Toeplitz algebra $\mathcal{T}_X$ associated to the correspondence, and defines the Cuntz--Pimsner algebra $\mathcal{O}_X$ as a suitable quotient. The Cuntz--Pimsner algebra $\mathcal{O}_X$ contains a faithful copy of $A$ and naturally carries a gauge action of the circle group (which can in turn be used to define a $\Z$-grading). Of key importance to the the study of Toeplitz and Cuntz--Pimsner algebras are numerous results that relate structural properties of $\mathcal{T}_X$ and $\mathcal{O}_X$ to structural properties of the coefficient algebra $A$ (for example, \cite[Theorem~7.1]{Katsura} deals with exactness, \cite[Theorem~7.2 and Corollary~7.4]{Katsura} with nuclearity, \cite[Theorem~8.6 and Proposition~8.8]{Katsura2} with the gauge-invariant ideal structure, and \cite[Proposition~8.2 and Theorem~8.6]{Katsura} with operator $K$-theory). This of course serves as motivation for attempting to realise $C^*$-algebras as Cuntz--Pimsner algebras: given a $C^*$-algebra $B$, can we find a subalgebra $A\subseteq B$ (which we hopefully already know some information about) and a $C^*$-correspondence $X$ over $A$ such that $B$ is isomorphic to $\mathcal{O}_X$? Various $C^*$-algebraic constructions are well-known to be realisable (nontrivially) in terms of Cuntz--Pimsner algebras: for example crossed products by automorphisms \cite[Example~3]{Pimsner}, graph $C^*$-algebras \cite[Example~8.13]{Raeburn}, and more generally topological graph algebras \cite{Katsura3}. 

Similarly, to construct a Toeplitz (or Cuntz--Pimsner) ring one requires a coefficient ring $R$, two $R$-bimodules $P$ and $Q$, and an $R$-bimodule homomorphism $\psi:P\otimes_R Q\rightarrow R$.  Such data is called an $R$-system. One can think of the bimodules $P$ and $Q$ as playing the role of the $C^*$-correspondence, whilst the map $\psi$ plays the role of an $R$-valued `inner product'. The associated Toeplitz ring $\mathcal{T}_{(P,Q,\psi)}$ is then defined in much the same way as in the $C^*$-algebraic setting. The Toeplitz ring then naturally carries a $\Z$-grading. Unfortunately, when moving to the purely algebraic setting, the missing analytic structure forces the authors of \cite{CO11} to impose additional technical hypotheses in order to pass from $\mathcal{T}_{(P,Q,\psi)}$ to the Cuntz--Pimsner ring $\mathcal{O}_{(P,Q,\psi)}$ (see \cite[Proposition~3.11 and Remark~4.10]{CO11}). We summarise all the necessary background material that we will require in Section~\ref{preliminaries}.  Despite these difficulties, a number of positive results have still been derived for Cuntz--Pimsner rings. For example, \cite[Corollary~5.4]{CO11} gives necessary and sufficient conditions for a graded representation of $\mathcal{O}_{(P,Q,\psi)}$ to be injective, \cite[Corollary~7.29]{CO11} gives a complete description of the lattice of two sided graded ideals of $\mathcal{O}_{(P,Q,\psi)}$, whilst \cite[Theorem~7.3]{COP12} gives necessary and sufficient condition for simplicity.  Of course, this construction, along with the positive results, would be redundant without a collection of interesting examples of rings that can be realised (nontrivially) as Cuntz--Pimsner rings. Carlsen and Ortega present three examples to motivate their construction:
\begin{enumerate}
\item[(i)] 
The crossed product of a ring $R$ by an automorphism can be realised as the Cuntz--Pimsner ring of an $R$-system \cite[Example~5.5]{CO11};
\item[(ii)]
Given a unital ring $R$, an idempotent $p\in R$, and a ring isomorphism $\phi:R\rightarrow pRp$, the fractional skew monoid ring $R[t_+,t_-,\alpha]$ (as constructed in  \cite{ara2004}) can be realised as the Cuntz--Pimsner ring of an $R$-system \cite[Example~5.7]{CO11};
\item[(iii)]
Given a directed graph $E$ and a unital commutative ring $K$, if we let $\oplus_{v\in E^0}K$ denote the ring of finitely supported $K$-valued functions on the vertex set $E^0$, then the Leavitt path algebra $L_K(E)$ can be realised as the Cuntz--Pimsner ring of an $\oplus_{v\in E^0}K$-system \cite[Examples~1.10 and 5.8]{CO11}.
\end{enumerate}
In \cite{CO11} these three examples are treated individually using various ad hoc techniques, and thus one of the main goals of this paper is to provide a systematic approach for dealing with examples of this sort. 

When we began this project, our original goal was to `algebraify' the results of \cite{RRS17} by investigating how the Steinberg algebra $A_K(\mathcal{G})$ associated to a groupoid $\mathcal{G}$ equipped with a continuous $\Z$-valued cocycle $c$ fits into the Cuntz--Pimsner framework. In particular, we wanted to show that $A_K(\mathcal{G})$ can be realised as the Cuntz--Pimsner ring of an $A_K(c^{-1}(0))$-system. Eventually we realised that our arguments could be applied in more general settings, and that is what we report in this paper. Our main result gives a construction for recognizing and building Cuntz--Pimsner rings:  given a ring $R$ and subring $A\subseteq R$, Theorem~\ref{mainth0} tells us what additional conditions are required in order for there to exist a graded isomorphism from $R$ to the Cuntz--Pimsner ring of some $A$-system. Our construction is very general and does not impose a lot of technical hypotheses.  At the same time, we show how the the three key examples listed above can be recovered (relatively easily) using our procedure. Finally, in Section~\ref{Steinberg algebras} we apply our results to Steinberg algebras associated to $\Z$-graded groupoids and complete our original goal of proving an algebraic analogue of \cite[Proposition~10]{RRS17}.

\section{Preliminaries: Cuntz--Pimsner rings}
\label{preliminaries}

In this section, we present some preliminary results about graded rings and recall from \cite{CO11} the basic construction of Cuntz--Pimsner rings.   A ring $A$ (possibly without unit) is called a \emph{$\Z$-graded ring} if $ A=\bigoplus_{ n \in \Z} A_{n}$
with each $A_{n}$ an additive subgroup of $A$ and $A_{n} A_{m}\subseteq A_{n+m}$ for all $n,m \in \Z$. Elements of $\cup_{n\in \Z}A_n$ are called \emph{homogeneous elements} of $A$, and a nonzero element $a\in A_n$ is said to have \emph{degree} $n$, which we denote by $|a|=n$. If $A$ is an algebra over a ring $K$, then $A$ is called a \emph{graded algebra} if $A$ is a graded ring and $A_{n}$ is a $K$-submodule for each $n \in \Z$. For the basics on graded ring theory see \cite{hazi,no}.

Let $R$ be a ring. Given two $R$-bimodules $P$ and $Q$ we denote by $P\otimes_R Q$ the $R$-balanced tensor product. An \emph{$R$-system} is a triple $(P,Q,\psi)$ where $P$ and $Q$ are $R$-bimodules, and $\psi:P\otimes_R Q\rightarrow R$ is an $R$-bimodule homomorphism; see \cite[Definition~1.1]{CO11}. 

Given an $R$-system $(P,Q,\psi)$, we say that a quadruple $(S, T, \sigma, B)$ is a \emph{covariant representation} of $(P,Q,\psi)$ on $B$ if:
\begin{itemize}
\item[(i)] $B$ is a ring;

\item[(ii)] $S : P\rightarrow{} B$ and $T: Q \rightarrow{} B$ are additive maps;

\item[(iii)] $\sigma: R\rightarrow{} B$ is a ring homomorphism;

\item[(iv)] $S(pr) = S(p)\sigma(r), S(rp) = \sigma(r)S(p), T(qr) = T(q)\sigma(r)$ and $T(rq) = \sigma(r)T(q)$ for $r\in R, p\in P$ and $q\in Q$;

\item[(v)] $\sigma(\psi(p\otimes_R q)) = S(p)T(q)$ for $p\in P$ and $q\in Q$.
\end{itemize} We denote by $R\langle S, T, \sigma\rangle$ the subring of $B$ generated by $\sigma(R)\cup T(Q) \cup S(P)$. If $R\langle S, T, \sigma\rangle=B$, then we say that the covariant representation $(S, T, \sigma,B)$ is \emph{surjective}, and if the ring homomorphism $\sigma$ is injective, then we say that the covariant representation $(S, T, \sigma,B)$ is \emph{injective}.

Given an $R$-bimodule $P$, we define $P^{\otimes 0}:=R$, $P^{\otimes 1}:=P$, and for $n\geq 2$, we define $P^{\otimes n}$ recursively by $P^{\otimes n}=P\otimes_R P^{\otimes (n-1)}$. Each $P^{\otimes n}$ then naturally has the structure of an $R$-bimodule. If $P$ and $Q$ are $R$-bimodules and $\psi:P\otimes_R Q\rightarrow R$ is an $R$-bimodule homomorphism, we define $\psi^{\otimes 0}:P^{\otimes 0}\otimes_R Q^{\otimes 0}\rightarrow R$ by $\psi^{\otimes 0}(r\otimes_r s):=rs$ for $r,s\in R$, $\psi^{\otimes 1}:=\psi$, and for $n\geq 2$, we define $\psi^{\otimes n}:P^{\otimes n}\otimes_R Q^{\otimes n}\rightarrow R$ recursively by 
\begin{align*}
\psi^{\otimes n}\big((p\otimes_R p')\otimes_R (q\otimes_R q')\big)&:=\psi\big(p\cdot\psi^{\otimes (n-1)}(p'\otimes_R q)\otimes_R  q'\big)\\
&\quad \text{for $p\in P$, $p'\in P^{\otimes (n-1)}$, $q\in Q^{\otimes (n-1)}$, $q'\in Q$.}
\end{align*}
It is routine to check that each $\psi^{\otimes n}$ is an $R$-bimodule homomorphism. Similarly, if $P$ is an $R$-bimodule, $B$ is a ring, and $S:P\rightarrow B$ is an additive map, we define $S^{\otimes 1}:=S$, and for $n\geq 2$, we define an additive map $S^{\otimes n}:P^{\otimes n}\rightarrow B$ recursively by $S^{\otimes n}(p\otimes p'):=S(p)S^{\otimes (n-1)}(p')$ for $p\in P$, $p'\in P^{\otimes (n-1)}$. It is not difficult to show that if $(S,T,\sigma,B)$ is a covariant representation of an $R$-system $(P,Q,\psi)$, then for each $n\geq 0$, $(S^{\otimes n},T^{\otimes n},\sigma,B)$ (with $S^{\otimes 0}:=T^{\otimes 0}:=\sigma$) is a covariant representation of the $R$-system $(P^{\otimes n},Q^{\otimes n},\psi^{\otimes n})$ (see \cite[Lemma~1.5]{CO11}). It follows that
\[
R\langle S, T, \sigma\rangle
=\mathrm{span}\{T^{\otimes m}(q)S^{\otimes n}(p):m,n\geq 0, q\in Q^{\otimes m}, p\in P^{\otimes n}\}. 
\]
Furthermore, it follows that $R\langle S, T, \sigma\rangle$ is $\Z$-graded with 
\[
R\langle S, T, \sigma\rangle_t
=\mathrm{span}\{T^{\otimes m}(q)S^{\otimes n}(p):m,n\geq 0, m-n=t, q\in Q^{\otimes m}, p\in P^{\otimes n}\}
\]
for each $t\in \Z$ (see \cite[Proposition 3.1]{CO11}). 

By \cite[Theorem 1.7]{CO11}, for an $R$-system $(P,Q,\psi)$, there exists an injective and surjective covariant representation $$(\iota_P, \iota_Q, \iota_R, \TT_{(P,Q,\psi)})$$ with the following property:   if $(S, T, \sigma,B)$ is a covariant representation of $(P,Q,\psi)$, then there exists a unique ring homomorphism $$\eta_{(S,T,\sigma,B)}:\TT_{(P,Q,\psi)}\rightarrow{}B$$ such that $\eta_{(S,T,\sigma,B)}\circ \iota_R =\sigma$, $\eta_{(S,T,\sigma,B)}\circ \iota_Q =T$, and $\eta_{(S,T,\sigma,B)}\circ \iota_P=S$. We call $(\iota_P, \iota_Q, \iota_R, \TT_{(P,Q,\psi)})$ the \emph{Toeplitz representation} of $(P,Q,\psi)$, and $\TT_{(P,Q,\psi)}$ the \emph{Toeplitz ring} of $(P,Q,\psi)$.

Let $(P,Q,\psi)$ be an $R$-system. Then a right $R$-module homomorphism $t : Q_R\rightarrow{} Q_R$ is called \emph{adjointable} with respect to $\psi$  if there exists a left $R$-module homomorphism $s:{ }_RP\rightarrow \, { }_RP$ such that 
\[\psi(p\otimes_R t(q))=\psi(s(p)\otimes_R q), \qquad \text{for $p\in P$, $q\in Q$.}\] 

We call $s$ an \emph{adjoint} of $t$ with respect to $\psi$. We write $\LL_P (Q)$ for the set of all adjointable
homomorphisms (with respect to $\psi$). We denote by $\LL_Q(P)$ the set of all the adjoints.

Given an $R$-system $(P,Q,\psi)$, for each $p\in P$ and $q\in Q$, we define homomorphisms $\theta_{q, p}: Q_R\rightarrow{}Q_R$ and $\theta_{p, q}: {}_RP\rightarrow {}_RP$ by 
\[
\theta_{q,p}(x):=q\psi(p\otimes_R x)
\quad \text{and} \quad
\theta_{p, q}(y):=\psi(y\otimes_R q)p
\quad \text{for each $x\in Q$ and $y\in P$.}
\]
Then $\theta_{q,p}\in \LL_P (Q)$ with $\theta_{p,q}$ as an adjoint. We call the homomorphisms $\{\theta_{q, p}:q\in Q,p\in P\}$ \emph{rank-one adjointable homomorphisms}, and we denote the linear span of all such homomorphisms by $\FF_P (Q)$. Similarly, we denote by $\FF_Q(P)$ the set of all rank-one adjoints.

We say that an $R$-system $(P,Q,\psi)$ satisfies \emph{condition} (FS) if for all finite sets $\{q_1, \cdots, q_n\}\subseteq Q$ and $\{p_1,\cdots, p_m\}\subseteq P$ there exist $\Theta\in\FF_P (Q)$ and $\Phi\in \FF_Q(P)$ such that $\Theta(q_i) = q_i$ and $\Phi(p_j)= p_j$ for every $i = 1,\cdots, n$ and $j = 1,\cdots, m$, respectively. By \cite[Lemma~3.8]{CO11}, if $(P,Q,\psi)$ satisfies \emph{condition} (FS) then so does $(P^{\otimes n},Q^{\otimes n},\psi^{\otimes n})$ for each $n\geq 1$.

Let $R$ be a ring and $(P,Q,\psi)$ an $R$-system. We define ring homomorphisms $\Delta: R\rightarrow {} \End_R(Q_R)$ and $\Gamma: R\rightarrow{}\End_R(_RP)^{\op}$ by
\begin{equation}
\label{delta}
\Delta(r)(q):=rq
\quad \text{and} \quad
\Gamma(r)(p):=pr
\quad \text{for $r\in R$, $p\in P$, $q\in Q$.}
\end{equation}
Note that for every $r\in R$ we have that $\Gamma(r)$ is the adjoint of $\Delta(r)$, and thus $\Delta(r)\in \LL_P (Q)$ and $\Gamma(r)\in \LL_Q(P)$.

\begin{lemma}\label{lemmaforpi} $($\cite[Lemma 2.2]{KPW98} and \cite[Proposition 3.11]{CO11}$)$ Let $R$ be a ring and $(P,Q,\psi)$ an $R$-system satisfying condition \rm{(FS)} and let $(S, T, \sigma,B)$ be a covariant representation of $(P,Q,\psi)$. Then
there exists a unique ring homomorphism $\pi_{T,S}: \FF_P(Q)\rightarrow{}B$ such that 
\[
\pi_{T,S}(\theta_{q,p}) =T(q)S(p) \quad \text{for $p\in P$, $q\in Q$.}
\]
\end{lemma}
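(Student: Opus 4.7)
The plan is to define $\pi_{T,S}$ on the generators of $\mathcal{F}_P(Q)$ by setting $\pi_{T,S}(\theta_{q,p}) := T(q)S(p)$ and extending linearly, then to verify well-definedness, multiplicativity, and uniqueness. Uniqueness is immediate once existence is established, since the $\theta_{q,p}$'s linearly span $\mathcal{F}_P(Q)$. Multiplicativity reduces, by linearity, to the case of two rank-one operators: using that $\psi$ is an $R$-bimodule homomorphism one checks $\theta_{q,p} \circ \theta_{q',p'} = \theta_{q\psi(p\otimes_R q'),\, p'}$, while on the other side, axioms (iv) and (v) give $T(q)S(p)T(q')S(p') = T(q)\sigma(\psi(p\otimes_R q'))S(p') = T(q\psi(p\otimes_R q'))S(p')$, which matches.

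The main obstacle is well-definedness: if $\sum_{i=1}^n \theta_{q_i,p_i} = 0$ in $\mathcal{F}_P(Q)$, we must show that $\sum_{i=1}^n T(q_i)S(p_i) = 0$ in $B$. This is where condition \textnormal{(FS)} is essential. Applying \textnormal{(FS)} to the finite sets $\{q_1,\dots,q_n\}\subseteq Q$ and $\{p_1,\dots,p_n\}\subseteq P$ produces $\Theta' = \sum_j \theta_{q'_j,p'_j}\in\mathcal{F}_P(Q)$ and $\Phi' = \sum_k \theta_{p''_k,q''_k}\in\mathcal{F}_Q(P)$ with $\Theta'(q_i) = q_i$ and $\Phi'(p_i) = p_i$ for every $i$. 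Unpacking these identities and applying axiom (iv) gives
\[
T(q_i) = \sum_j T(q'_j)\,\sigma\bigl(\psi(p'_j\otimes_R q_i)\bigr),
\qquad
S(p_i) = \sum_k \sigma\bigl(\psi(p_i\otimes_R q''_k)\bigr)\,S(p''_k).
\]

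Multiplying and using the balance identity $\psi(p'_j\otimes_R q_i)\psi(p_i\otimes_R q''_k) = \psi(p'_j \otimes_R \theta_{q_i,p_i}(q''_k))$ (which comes directly from the $R$-bimodule property of $\psi$) yields
\[
T(q_i)S(p_i) = \sum_{j,k} T(q'_j)\,\sigma\bigl(\psi(p'_j\otimes_R \theta_{q_i,p_i}(q''_k))\bigr)\,S(p''_k).
\]
Summing over $i$ and pulling $\sum_i$ through $\sigma\circ\psi$ by additivity replaces $\theta_{q_i,p_i}$ by $\sum_i \theta_{q_i,p_i}=0$, so $\sum_i T(q_i)S(p_i) = 0$, as desired. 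The remainder is routine bookkeeping, so the well-definedness step is the only place where any real input (namely \textnormal{(FS)}) is used; everything else follows from the axioms of a covariant representation and the balance properties of $\psi$.
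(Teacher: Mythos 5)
Your proposal is correct and follows essentially the same route as the argument the paper relies on (it cites \cite[Lemma~2.2]{KPW98} and \cite[Proposition~3.11]{CO11} rather than reproving the lemma): define $\pi_{T,S}$ on the spanning set $\{\theta_{q,p}\}$, and use condition (FS) on both sides, via $\Theta'(q_i)=q_i$ and $\Phi'(p_i)=p_i$, to sandwich $\sum_i T(q_i)S(p_i)$ into an expression involving $\big(\sum_i\theta_{q_i,p_i}\big)(q''_k)$, which vanishes, giving well-definedness; multiplicativity and uniqueness then follow from $\theta_{q,p}\theta_{q',p'}=\theta_{q\psi(p\otimes_R q'),p'}$ and the covariance axioms exactly as you say. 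No gaps.
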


We say that a two-sided ideal $J$ of $R$ is $\psi$-\emph{compatible} if $J\subseteq \Delta^{-1}(\FF_P (Q))$, and we say
that a $\psi$-compatible two-sided ideal $J$ of $R$ is \emph{faithful} if $J\cap \ker\Delta= \{0\}$. For a $\psi$-compatible two-sided ideal $J$ of $R$, we define $\TT(J)$ to be the two-sided
ideal of $\TT_{(P,Q,\psi)}$ generated by $\{\iota_R(x)-\pi_{\iota_Q,\iota_P}(\Delta(x)): x \in J\}$.

The \emph{Cuntz--Pimsner ring} relative to the $\psi$-compatible ideal $J$ is the quotient ring 
\[
\O_{(P,Q,\psi)}(J)=\TT_{(P,Q,\psi)}/\TT(J).
\]
Note that $\O_{(P,Q,\psi)}(J)$ is a $\Z$-graded algebra whose grading is inherited from $\TT_{(P,Q,\psi)}$. We denote by $\rho_J$ the quotient map $\rho_J:\TT_{(P,Q,\psi)}\rightarrow{}\O_{(P,Q,\psi)}(J)$. A covariant representation $(S, T, \sigma,B)$ of $(P,Q,\psi)$ is said to be \emph{Cuntz--Pimsner invariant} relative to $J$ if $\pi_{T,S}(\Delta(x)) = \sigma(x)$ for every $x\in J$. If we let $\iota^J_R:= \rho_J \circ \iota _R, \iota^J_Q:=\rho_J\circ \iota_Q$ and $\iota^J_P:=\rho_J\circ \iota_P$, then $(\iota^J_P, \iota^J_Q, \iota^J_R,\O_{(P,Q,\psi)}(J))$ is a surjective covariant representation of $(P,Q,\psi)$ which is Cuntz--Pimsner invariant relative to $J$ (see \cite[Theorem 3.18]{CO11}). It follows that $\O_{(P,Q,\psi)}(J)$ also has a universal property: if $(S, T, \sigma,B)$ is a covariant representation of $(P,Q,\psi)$ that is Cuntz--Pimsner invariant relative to $J$,  then there exists a unique ring homomorphism 
\[
\eta^J_{(S,T,\sigma,B)}:\O_{(P,Q,\psi)}(J)\rightarrow{}B
\]
such that $\eta^J_{(S,T,\sigma,B)}\circ \iota_R^J =\sigma$, $\eta^J_{(S,T,\sigma,B)}\circ \iota_Q^J =T$, and $\eta^J_{(S,T,\sigma,B)}\circ \iota_P^J=S$.

Given a faithful $\psi$-compatible two-sided ideal $J$ of $R$, we say that $J$ is \emph{maximal} if $J=J'$ whenever $J'$ is a faithful $\psi$-compatible two-sided ideal of $R$ with $J\subseteq J'$. We say that $J$ is \emph{uniquely maximal} if $J'\subseteq J$ for every faithful $\psi$-compatible two-sided ideal $J'$ of $R$.

\begin{deff} Let $R$ be a ring and $(P,Q,\psi)$ an $R$-system satisfying condition (FS). If there exists a uniquely maximal faithful
$\psi$-compatible two-sided ideal $J$ of $R$, then we define the Cuntz--Pimsner ring of $(P,Q,\psi)$ to be the ring $$\O_{(P, Q, \psi)}:=\O_{(P, Q, \psi)}(J)$$ and we let $(\iota^{CP}_P, \iota^{CP}_Q, \iota_{R}^{CP},\O_{(P, Q, \psi)})$ denote the covariant representation $$(\iota^{J}_P, \iota^{J}_Q, \iota_{R}^{J},\O_{(P, Q, \psi)}(J))$$ and call it the \emph{Cuntz--Pimsner representation} of $(P,Q,\psi)$. 
\end{deff}

Let $J$ be a two-sided ideal of a ring $R$. We let $J^{\perp}$ denote the two-sided ideal $\{r\in R : ry = yr = 0 \text{ for all } y\in J\}$.

\begin{lemma}\label{lemmaforexistence}\cite[Lemma 5.2]{CO11} Let $R$ be a ring and let $(P,Q,\psi)$ be an $R$-system which satisfies condition 
\rm{(FS)}. If $\Delta^{-1}(\FF_P (Q)\cap (\ker \Delta)^{\perp}\cap \ker\Delta= \{0\}$, then $J: = \Delta^{-1}(\FF_P (Q))\cap (\ker \Delta)^{\perp}$ is a uniquely
maximal faithful $\psi$-compatible two-sided ideal of $R$. Thus the Cuntz--Pimsner ring of $(P,Q,\psi)$ is defined in this case.
\end{lemma}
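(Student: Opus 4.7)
The plan is to check the four defining properties of $J$ — two-sidedness, $\psi$-compatibility, faithfulness, and unique maximality — with the real substance residing in the last of these.

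For the formal checks: $(\ker\Delta)^{\perp}$ is a two-sided ideal by construction, and $\Delta^{-1}(\FF_P(Q))$ is a two-sided ideal because $\FF_P(Q)$ is closed under left and right composition with adjointable homomorphisms. Indeed, for $x\in\Delta^{-1}(\FF_P(Q))$ and $r\in R$ one uses identities such as $\Delta(r)\circ\theta_{q,p}=\theta_{rq,p}$, together with the corresponding right-hand identity invoking $\Gamma(r)$ as an adjoint of $\Delta(r)$, to verify that $\Delta(rx)=\Delta(r)\Delta(x)$ and $\Delta(xr)=\Delta(x)\Delta(r)$ land back in $\FF_P(Q)$. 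Hence $J$ is a two-sided ideal, and it is $\psi$-compatible immediately from its definition. The faithfulness assertion $J\cap\ker\Delta=\{0\}$ is precisely the hypothesis of the lemma.

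The main step is unique maximality. Let $J'$ be any faithful $\psi$-compatible two-sided ideal of $R$. Since $J'$ is $\psi$-compatible we already have $J'\subseteq\Delta^{-1}(\FF_P(Q))$, so it remains to show $J'\subseteq(\ker\Delta)^{\perp}$. The key observation is that for any $x\in J'$ and $y\in\ker\Delta$, the product $xy$ lies in $J'$ (because $J'$ is a right ideal) and also in $\ker\Delta$ (because $\Delta(xy)=\Delta(x)\Delta(y)=0$), so by the faithfulness of $J'$ one obtains $xy\in J'\cap\ker\Delta=\{0\}$. The symmetric argument using the left-ideal property of $J'$ gives $yx=0$. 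Hence every element of $J'$ annihilates $\ker\Delta$ on both sides, so $J'\subseteq(\ker\Delta)^{\perp}$, completing the proof. The only delicate ingredient is this product trick $xy\in J'\cap\ker\Delta$; once it is spotted, everything else is essentially formal, and I do not anticipate any further obstacle.
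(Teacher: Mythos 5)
Your proof is correct and follows essentially the same argument as the cited source \cite[Lemma~5.2]{CO11} (the paper itself only quotes this result without reproving it): the routine verifications that $J=\Delta^{-1}(\FF_P(Q))\cap(\ker\Delta)^{\perp}$ is a $\psi$-compatible two-sided ideal, faithfulness being exactly the hypothesis, and then the product trick $xy,\,yx\in J'\cap\ker\Delta=\{0\}$ showing every faithful $\psi$-compatible ideal $J'$ is contained in $(\ker\Delta)^{\perp}$, hence in $J$. Nothing further is needed; condition (FS) enters only to ensure that the Cuntz--Pimsner ring is then defined.
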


We finish this section with one of the properties of Cuntz--Pimsner rings which will be used in later sections.

\begin{thm} \label{gut} $($\cite[Corollary 5.4]{CO11} and \cite[Theorem 6.4]{Katsura}, The Graded Uniqueness Theorem$)$ Let $R$ be a ring and let $(P,Q,\psi)$ be an $R$-system which satisfies condition \rm{(FS)}, and assume that there exists a uniquely maximal faithful $\psi$-compatible two-sided ideal of $R$. If $A$ is a $\mathbb Z$-graded ring and $\eta : \O_{(P,Q,\psi)} \rightarrow A$ is a graded ring homomorphism with $\eta (\iota_R^{CP}(r)) \not =  0$ for every $r \in R \backslash \{0\}$, then $\eta$ is injective.
\end{thm}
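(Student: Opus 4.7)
Since $\eta$ is graded, $\ker\eta$ is a graded two-sided ideal of $\O_{(P,Q,\psi)}$, and the hypothesis ``$\eta(\iota_R^{CP}(r))\neq 0$ for every $r\in R\setminus\{0\}$'' says precisely that $\ker\eta\cap\iota_R^{CP}(R)=\{0\}$. Hence the theorem follows from the more general claim that \emph{every nonzero graded two-sided ideal $I$ of $\O_{(P,Q,\psi)}$ contains a nonzero element of $\iota_R^{CP}(R)$}. My plan is to prove this by picking a nonzero homogeneous element $x\in I$ and systematically reducing its degree until an element of $\iota_R^{CP}(R)$ is exposed.

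Write $x\in I$ nonzero and homogeneous of degree $t$. Suppose first that $t>0$. Using the spanning description of $\O_{(P,Q,\psi)}_t$ from Section~\ref{preliminaries}, I express $x$ as a finite sum of monomials $\iota_Q^{CP\otimes m}(q)\,\iota_P^{CP\otimes n}(p)$ with $m-n=t$; only finitely many tensors appear among the left and right factors. Condition (FS), applied on the $p$-side to these finitely many tensors, produces $\Phi\in\FF_Q(P)$ which acts as the identity on each of them; writing $\Phi$ as a sum of rank-one adjoints $\theta_{p,q}$, the covariance relations let me multiply $x$ on the right by a corresponding combination of $\iota_Q^{CP}(q)\iota_P^{CP}(p)$-type elements so as to (a) stay inside the ideal $I$, (b) drop the degree from $t$ to $0$, and (c) preserve nonzeroness, the last point being guaranteed because the reassembled $\Phi$ acts trivially on the relevant $p$'s. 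A symmetric argument using (FS) on the $q$-side handles $t<0$. Thus we may assume $x$ is a nonzero element of $I\cap\O_{(P,Q,\psi)}_0$.

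Now decompose $x=\iota_R^{CP}(r_0)+\sum_{m\geq 1}\sum_i\iota_Q^{CP\otimes m}(q_i^{(m)})\,\iota_P^{CP\otimes m}(p_i^{(m)})$. By Lemma~\ref{lemmaforpi} the tail equals $\pi_{\iota_Q^{CP},\iota_P^{CP}}(\Xi)$ for some $\Xi\in\bigoplus_{m\geq 1}\FF_{P^{\otimes m}}(Q^{\otimes m})$, and Cuntz--Pimsner invariance relative to the uniquely maximal faithful $\psi$-compatible ideal $J$ gives $\iota_R^{CP}(r)=\pi_{\iota_Q^{CP},\iota_P^{CP}}(\Delta(r))$ for $r\in J$. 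The plan is to combine these identifications with faithfulness ($J\cap\ker\Delta=\{0\}$) and maximality of $J$ to rewrite $x$ as $\iota_R^{CP}(r)$ for a nonzero $r\in R$, placing $\iota_R^{CP}(r)\in I$ and contradicting $\ker\eta\cap\iota_R^{CP}(R)=\{0\}$. The main obstacle lies here: unlike in the $C^*$-algebraic setting there is no faithful conditional expectation onto the degree-zero subring, so the ``peeling'' of tensorial layers must be carried out purely algebraically, exploiting the universal property of $\O_{(P,Q,\psi)}$ and the defining properties of $J$. The degree-reduction in the previous paragraph is a bookkeeping exercise powered by (FS); the content of the theorem is concentrated in this final step.
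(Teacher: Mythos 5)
The paper does not actually prove this statement---it is imported verbatim from \cite[Corollary~5.4]{CO11}---so your attempt has to stand as a self-contained proof, and as written it does not. Your opening reduction is correct: $\ker\eta$ is a graded two-sided ideal and the hypothesis says exactly that $\ker\eta\cap\iota_R^{CP}(R)=\{0\}$, so it suffices to show that every nonzero graded ideal $I$ of $\O_{(P,Q,\psi)}$ contains a nonzero element of $\iota_R^{CP}(R)$. But your degree-reduction step fails as described. Applying (FS) on the $p$-side to the right-hand factors and reassembling the rank-one adjoints only rewrites $\iota_P^{CP\otimes n_i}(p_i)$ as $\iota_P^{CP\otimes n_i}(p_i)$ times a \emph{degree-zero} element, so multiplying $x$ on the right by that combination leaves the degree at $t$; it does not drop it to $0$, and no nonzeroness issue is even reached. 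For $t>0$ the surplus legs sit in the left factors $Q^{\otimes m_i}$ ($m_i\geq t$): the correct move is to apply (FS) for the system $(P^{\otimes t},Q^{\otimes t},\psi^{\otimes t})$ to the finitely many leading $Q^{\otimes t}$-components $q_i'$ of the $q_i$, getting $\Theta=\sum_j\theta_{\tilde q_j,\tilde p_j}$ with $\Theta(q_i')=q_i'$, and then check the identity $x=\sum_j\iota_Q^{CP\otimes t}(\tilde q_j)\bigl(\iota_P^{CP\otimes t}(\tilde p_j)\,x\bigr)$. Each $\iota_P^{CP\otimes t}(\tilde p_j)\,x$ lies in $I$ and is homogeneous of degree $0$, so either one of them is nonzero (and $I\cap(\O_{(P,Q,\psi)})_0\neq\{0\}$) or all vanish and then $x=0$. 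This contrapositive reassembly identity is what justifies your point (c); the assertion that ``$\Phi$ acts trivially on the relevant $p$'s'' does not, because it operates on the wrong side and is degree-preserving. (For $t<0$ one argues symmetrically with the trailing $P$-legs, extracting the degree-zero elements $x\,\iota_Q^{CP\otimes(-t)}(\hat q_j)$.)

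The more serious gap is the final step, which in your proposal is only a declared plan. The theorem's content is precisely the claim that a nonzero element of $I\cap(\O_{(P,Q,\psi)})_0$ forces a nonzero element of $\iota_R^{CP}(R)$ into $I$; this is where faithfulness and unique maximality of the ideal $J$ must enter, and where the lack of a faithful conditional expectation onto the degree-zero subring has to be circumvented by analysing the core as an increasing union of subrings generated by $\iota_R^{CP}(R)$ and the images of the $\FF_{P^{\otimes m}}(Q^{\otimes m})$. Your proposal does not carry this out, and moreover its starting point is shaky: the decomposition $x=\iota_R^{CP}(r_0)+\text{(tail)}$ is not canonical, because Cuntz--Pimsner invariance identifies $\iota_R^{CP}(r)$ with $\pi_{\iota_Q^{CP},\iota_P^{CP}}(\Delta(r))$ for every $r\in J$, so $\iota_R^{CP}(R)$ overlaps with the span of the elements $\iota_Q^{CP\otimes m}(q)\iota_P^{CP\otimes m}(p)$; any argument built on such a decomposition must account for this ambiguity, and it is exactly in resolving it that $J\cap\ker\Delta=\{0\}$ and the maximality of $J$ are used in Carlsen--Ortega's proof. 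As it stands, then, your text is an outline whose central step is missing, and it cannot replace the citation to \cite{CO11}.
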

 
\section{$\Z$-graded rings as Cuntz--Pimsner rings}

Given a ring $A$ and a subring $R\subseteq A$, it is natural to ask whether $A$ can be realised as the Cuntz--Pimsner ring of some $R$-system. If $R=A$, then the answer is trivially yes: writing $\psi$ for the zero map, it follows almost immediately that $(\{0\},\{0\},\psi)$ is an $A$-system satisfying condition (FS), $\{0\}\subseteq A$ is a uniquely maximal faithful $\psi$-compatible two-sided ideal of $A$, and $A\cong \TT_{(\{0\},\{0\},\psi)}\cong \O_{(\{0\},\{0\},\psi)}$. Here, the key to making use of the Cuntz--Pimsner ring construction is to relate structural properties of the coefficient ring $R$ to structural properties of $A$,  so we would like $R$ to be `smaller' and more tractable than $A$ itself.  In this section, we consider a $\mathbb{Z}$-graded ring $A$ and derive sufficient conditions for when  $A$ is the Cuntz--Pimsner ring of a $R$-system where $R$ is a subring of $A_0$ (Theorem~\ref{mainth0}).

Before we state and prove our main theorem, we fix some more notation. Let $R$ be a ring, $M$ a left $R$-module and $I$ a subset of $M$. The \emph{left annihilator} $$\ann_R(I)=\{r\in R: rx=0 \text{ for all } x\in I\}$$ of $I$ by $R$ is a left ideal of $R$. In the case that $I$ is a submodule of $M$, $\ann_R(I)$ is a two-sided ideal of $R$. 

\begin{thm}\label{mainth0}
Let $A=\bigoplus_{i\in \Z}A_i$ be a $\Z$-graded ring, $R$ a subring of $A_0$, and $I\subseteq A_1$ and  $J\subseteq A_{-1}$ additive subgroups such that 
\begin{enumerate}[\upshape(1)]
\item $RI, IR \subseteq I$, $RJ, JR \subseteq J$ and 
$JI\subseteq R$;

\medskip

\item For any finite subset $\{i_1,\dots,i_n\}\subseteq I$ there is an element $a$ in $IJ$ such that $a i_l=i_l$ for each $1\leq l \leq n$, and for 
any finite subset $\{j_1,\dots,j_m\}\subseteq J$ there is an element $b$ in $IJ$ such that $j_l b=j_l$ for each $1\leq l \leq m$;

\medskip

\item For $r\in \ann_R(I)^\bot$ and $a\in IJ$, if $r-a \in \ann_{A_0}(I)$, then $a\in R$
\footnote{Notice that if $IJ\subseteq R$, then Condition (3) is trivially satisfied.};

\medskip 

\item $\ann_R(I)\cap \ann_R(I)^\bot=\{0\}$.
\end{enumerate}
Then there exists an $R$-bimodule homomorphism $\psi:J\otimes_R I \rightarrow R$ such that $\psi(j\otimes_R i)=ji$ for each $j\in J, i\in I$, and $(J,I,\psi)$ is an $R$-system. Furthermore, there is a graded isomorphism from the Cuntz--Pimsner ring $\mathcal O_{(J,I,\psi)}$ of the $R$-system $(J,I,\psi)$ to the subring of $A$ generated by $R,I,J$. 
\begin{proof}
First note that from Condition (1) we see that $I$ and $J$ are $R$-bimodules and there exists an $R$-bimodule homomorphism 
$\psi:J\otimes_R I \rightarrow R$ such that $\psi(j\otimes_R i)=ji$ for each $j\in J, i\in I$. Thus, $(J,I,\psi)$ is an $R$-system. Let $S$ denote the subring of $A$ generated by $R\cup I\cup J$, and $i_R, i_I, i_J$ denote the inclusion maps from $R,I,J$ to $S$. Clearly $(i_J, i_I, i_R, S)$ is a surjective covariant representation of the system $(J,I,\psi)$. In this setting condition (FS)  translates to Condition (2). Hence, by Lemma~\ref{lemmaforpi} there exists a ring homomorphism $\pi_{i_I, i_J}:\mathcal F_{J}(I)\rightarrow S$ such that $\pi_{i_I, i_J}(\theta_{i,j})=i_I(i)i_J(j)=ij$ for each $i\in I, j\in J$. By \eqref{delta} the map $\Delta: R\rightarrow{} \End_R(I)$ is given by $\Delta(r)(x)=rx$ for $r\in R$ and $x\in I$, and thus $\ker \Delta=\ann_R(I)$.  By Condition (4) and Lemma ~\ref{lemmaforexistence}, $\Delta^{-1}(\mathcal F_{J}(I))\cap \ann_R(I)^\bot$ is a uniquely maximal faithful $\psi$-compatible two-sided ideal of $R$, and thus the Cuntz--Pimsner ring $\mathcal O_{(J,I,\psi)}$ is well-defined. We let $\iota_R^{CP}:R\rightarrow  \mathcal O_{(J,I,\psi)}$, $\iota_I^{CP}:I\rightarrow \mathcal O_{(J,I,\psi)}$, and $\iota_J^{CP}:J\rightarrow \mathcal O_{(J,I,\psi)}$ denote the maps whose images generate $\mathcal O_{(J,I,\psi)}$. We will show that this ring is graded isomorphic to $S$. 

We claim that 
\begin{equation}\label{mmnnbb}
\Delta^{-1}(\mathcal F_{J}(I))\cap \ann_R(I)^\bot=IJ \cap R. 
\end{equation} 
Clearly, $IJ \cap R \subseteq \Delta^{-1}(\mathcal F_{J}(I))$. We now show that 
\begin{equation}\label{anni}
IJ \cap R \subseteq \ann_R(I)^\bot.
\end{equation} Let $x=\sum_{l=1}^n i_lj_l \in IJ\cap R$ and $y \in \ann_R(I)$. Then 
$yx=\sum_{l=1}^n (yi_l)j_l=0$. On the other hand, $xy=\sum_{l=1}^n i_l (j_ly)$. By Condition (1) each $j_ly \in J$, and by Condition (2) there is some $b\in IJ$ such that $j_lyb=j_ly$ for each $l\in \{1,\ldots, n\}$. But $yb=0$ as $y \in \ann_R(I)$ and $b\in IJ$. Hence $xy=0$, and so $x \in \ann_R(I)^\bot$. Thus $IJ \cap R \subseteq \Delta^{-1}(\mathcal F_{J}(I))\cap \ann_R(I)^\bot$. For the reverse containment, suppose $x \in \Delta^{-1}(\mathcal F_{J}(I))\cap \ann_R(I)^\bot$. It follows that there is an element $\sum_{l=1}^ni_l j_l \in IJ$ with $i_l\in I$ and $j_l\in J$ such that $x-\sum_{l=1}^n i_l j_l \in \ann_{A_0}(I)$. 
Condition (3) guarantees that $\sum_{l=1}^n i_l j_l \in IJ \cap R$, and so $x-\sum_{l=1}^n i_l j_l \in \ann_R(I)$.  Since $x\in  \ann_R(I)^\bot$, by \eqref{anni} it follows that $x-\sum_{l=1}^n i_l j_l \in  \ann_R(I)^\bot$. Now Condition (4) implies that $x=\sum_{l=1}^ni_l j_l  \in IJ$, proving the claim.

Using \eqref{mmnnbb} it is routine to check that $\pi_{i_I, j_J}(\Delta(x))=\iota_R(x)$ for each
\[x\in \Delta^{-1}(\mathcal F_{J}(I))\cap \ann_R(I)^\bot=IJ \cap R,\] and so the surjective covariant representation $(i_J, i_I, i_R, S)$ of $(J,I,\psi)$ is Cuntz--Pimsner invariant. By \cite[Theorem 3.18]{CO11}, there exists a surjective ring homomorphism $\eta:\mathcal O_{(J,I,\psi)}\rightarrow S$ such that $\eta\circ \iota_R^{CP}=i_R$, $\eta\circ \iota_I^{CP}=i_I$, and $\eta\circ \iota_J^{CP}=i_J$. As $R\subseteq A_0$, $I\subseteq A_1$, and $J\subseteq A_{-1}$, it follows that $\eta$ is graded. Finally, since $\eta(\iota_R^{CP} (r))=r$ for all $r\in R$, the graded uniqueness theorem for Cuntz--Pimsner rings (Theorem~\ref{gut}) ensures that $\eta$ is also injective. 
\end{proof}
\end{thm}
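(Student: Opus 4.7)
My plan is to set up the covariant representation coming from the inclusion maps, verify the Cuntz--Pimsner hypotheses one by one using the four conditions, and then close the loop with the Graded Uniqueness Theorem. First, Condition~(1) makes $I$ and $J$ into $R$-bimodules with actions inherited from $A$, and since $JI\subseteq R$, the universal property of $\otimes_R$ immediately produces a well-defined $R$-bimodule map $\psi:J\otimes_R I\to R$, $j\otimes_R i\mapsto ji$. Letting $S$ be the subring of $A$ generated by $R\cup I\cup J$, the inclusion maps $i_R, i_I, i_J$ obviously satisfy axioms (i)--(v) of a covariant representation, and by construction this representation is surjective.

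Next I would check that $(J,I,\psi)$ satisfies condition~(FS). Given $\{i_1,\dots,i_n\}\subseteq I$, Condition~(2) supplies $a=\sum_k i'_k j'_k\in IJ$ with $ai_l=i_l$; then the rank-one operator $\Theta=\sum_k \theta_{i'_k,j'_k}\in\FF_J(I)$ satisfies $\Theta(i_l)=\sum_k i'_k\psi(j'_k\otimes i_l)=ai_l=i_l$, and the $\Phi$ statement for $J$ is symmetric. Lemma~\ref{lemmaforpi} then gives a ring homomorphism $\pi_{i_I,i_J}:\FF_J(I)\to S$ sending $\theta_{i,j}\mapsto ij$. Since $\Delta(r)(x)=rx$, we also have $\ker\Delta=\ann_R(I)$, so Condition~(4) is exactly the hypothesis of Lemma~\ref{lemmaforexistence}, giving that $J_0:=\Delta^{-1}(\FF_J(I))\cap\ann_R(I)^\bot$ is the uniquely maximal faithful $\psi$-compatible two-sided ideal of $R$; in particular $\mathcal O_{(J,I,\psi)}$ is defined.

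The technical heart of the argument is the identification
\[
\Delta^{-1}(\FF_J(I))\cap\ann_R(I)^\bot = IJ\cap R,
\]
which is what I need in order to verify Cuntz--Pimsner invariance of $(i_J,i_I,i_R,S)$ relative to $J_0$. The inclusion $\supseteq$ is direct: if $x=\sum i_l j_l\in IJ\cap R$, then $\Delta(x)=\sum \theta_{i_l,j_l}\in\FF_J(I)$, while for $y\in\ann_R(I)$ one has $yx=0$ trivially, and $xy=\sum i_l(j_l y)$ vanishes after using Condition~(2) to write $j_l y = j_l y \cdot b$ for some $b\in IJ$ with $yb=0$ (since $y\in\ann_R(I)$). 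For $\subseteq$, if $x\in\Delta^{-1}(\FF_J(I))\cap\ann_R(I)^\bot$, then $\Delta(x)$ agrees with $\Delta(a)$ for some $a\in IJ$ on $I$, so $x-a\in\ann_{A_0}(I)$; Condition~(3) forces $a\in R$, and then $x-a\in\ann_R(I)\cap\ann_R(I)^\bot=\{0\}$ by Condition~(4). This is the step I expect to be the most delicate, because it is the only place where Condition~(3) is essential and where one must carefully keep track of the difference between annihilators in $R$ and in $A_0$.

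With the identification in hand, one checks directly that $\pi_{i_I,i_J}(\Delta(x))=x=i_R(x)$ for $x\in IJ\cap R$, so the representation is Cuntz--Pimsner invariant relative to $J_0$. The universal property of $\mathcal O_{(J,I,\psi)}$ then yields a surjective ring homomorphism $\eta:\mathcal O_{(J,I,\psi)}\to S$ with $\eta\circ\iota_R^{CP}=i_R$, $\eta\circ\iota_I^{CP}=i_I$, $\eta\circ\iota_J^{CP}=i_J$. Because the generators $R,I,J$ of $S$ live in degrees $0,1,-1$ respectively, matching the grading of $\mathcal O_{(J,I,\psi)}$, the map $\eta$ is graded. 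Finally, $\eta(\iota_R^{CP}(r))=r\neq 0$ for nonzero $r\in R$, so the Graded Uniqueness Theorem~\ref{gut} upgrades $\eta$ to an isomorphism, completing the proof.
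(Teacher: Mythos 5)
Your proposal is correct and follows essentially the same route as the paper: inclusion maps give a surjective covariant representation, Condition (2) yields (FS), Condition (4) with Lemma~\ref{lemmaforexistence} produces the uniquely maximal faithful $\psi$-compatible ideal, the identification $\Delta^{-1}(\FF_J(I))\cap\ann_R(I)^\bot=IJ\cap R$ (using Conditions (1)--(4) exactly as the paper does) gives Cuntz--Pimsner invariance, and the Graded Uniqueness Theorem finishes the argument. The only cosmetic difference is that you spell out the (FS) verification explicitly and compress the step $x-a\in\ann_R(I)^\bot$, which silently uses your previously established inclusion $IJ\cap R\subseteq\ann_R(I)^\bot$ applied to $a$, just as the paper invokes \eqref{anni} there.
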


Specialising Theorem~\ref{mainth0} to the situation where $I=A_1$, $J=A_{-1}$, and $R=A_0$, we have the following corollary. 

\begin{cor}\label{grcp1}
Let $A=\bigoplus_{i\in \Z}A_i$ be a $\Z$-graded ring such that 
\begin{enumerate}[\upshape(1)]
\item $A_n=A_1^n$ and  $A_{-n}=A_{-1}^n$ for $n>0$;

\medskip

\item For $\{a_1,\dots,a_n\}\subseteq A_1$ there is an $r \in A_1A_{-1}$ such that $r a_l=a_l$ for each $1\leq l \leq n$, and for 
$\{b_1,\dots,b_m\}\subseteq A_{-1}$ there is an $s \in A_1A_{-1}$ such that $b_l s=b_l$ for each $1\leq l \leq m$;

\medskip

\item $\ann_{A_0}(A_1)\cap \ann_{A_0}(A_1)^\bot=\{0\}$.
\end{enumerate}
Then there is a graded isomorphism from the Cuntz--Pimsner ring of the $A_0$-system $(A_{-1},A_1,\psi)$ to $A$, where $\psi:A_{-1}\otimes_{A_0}A_1 \rightarrow A_0$ is the $A_0$-bilinear map that sends $a\otimes_{A_0} b$ to $ab$ for each $a\in A_{-1}, b\in A_1$. 
\end{cor}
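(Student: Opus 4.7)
The plan is to deduce this corollary directly from Theorem~\ref{mainth0} by taking $R=A_0$, $I=A_1$, and $J=A_{-1}$, verifying its four hypotheses, and then using Condition~(1) of the corollary to upgrade the conclusion from ``the subring generated by $R\cup I\cup J$'' to all of $A$.

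First I would check Condition~(1) of Theorem~\ref{mainth0}. Since $A$ is $\Z$-graded, we automatically have $A_0A_1,\,A_1A_0\subseteq A_1$, $A_0A_{-1},\,A_{-1}A_0\subseteq A_{-1}$, and $A_{-1}A_1\subseteq A_0$, so the containments $RI,IR\subseteq I$, $RJ,JR\subseteq J$, and $JI\subseteq R$ hold for free. Condition~(2) of the theorem is literally Condition~(2) of the corollary. Condition~(3) of the theorem is the one involving $\ann_{A_0}(I)^\bot$ and $IJ$; here $IJ=A_1A_{-1}\subseteq A_0=R$, so the footnote in Theorem~\ref{mainth0} applies and the condition holds vacuously. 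Finally, Condition~(4) of the theorem is exactly Condition~(3) of the corollary.

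Having verified the hypotheses, Theorem~\ref{mainth0} produces the $A_0$-bimodule map $\psi:A_{-1}\otimes_{A_0}A_1\to A_0$ sending $a\otimes_{A_0}b\mapsto ab$, confirms that $(A_{-1},A_1,\psi)$ is an $A_0$-system for which the Cuntz--Pimsner ring $\mathcal{O}_{(A_{-1},A_1,\psi)}$ is well-defined, and yields a graded isomorphism from $\mathcal{O}_{(A_{-1},A_1,\psi)}$ onto the subring $S$ of $A$ generated by $A_0\cup A_1\cup A_{-1}$.

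The only remaining point, and the one specific to this specialization, is to show $S=A$. This is where Condition~(1) of the corollary enters: for each $n>0$ we have $A_n=A_1^n\subseteq S$ and $A_{-n}=A_{-1}^n\subseteq S$, while $A_0=R\subseteq S$ by construction, so $A=\bigoplus_{n\in\Z}A_n\subseteq S$; the reverse inclusion is trivial. I do not expect any significant obstacle here since the work has been front-loaded into Theorem~\ref{mainth0}; the only mild care needed is to make sure the footnote hypothesis $IJ\subseteq R$ is what lets us bypass Condition~(3) of the theorem, which is exactly what happens because the product of degree $1$ and degree $-1$ homogeneous components lands in degree $0$.
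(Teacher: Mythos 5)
Your proposal is correct and is essentially the paper's own argument: the paper states the corollary as a direct specialisation of Theorem~\ref{mainth0} with $R=A_0$, $I=A_1$, $J=A_{-1}$, and the details you supply (grading gives Condition~(1), the footnote with $IJ=A_1A_{-1}\subseteq A_0$ disposes of Condition~(3), and hypothesis~(1) of the corollary upgrades the generated subring to all of $A$) are exactly what that specialisation leaves implicit.
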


Recall that a $\Z$-graded ring $A=\bigoplus_{i\in \Z}A_i$ is said to be \emph{strongly graded} if $A_m A_n=A_{m+n}$ for each $m,n\in \Z$ (see \cite{hazi}). We say that a $\Z$-graded ring $A$ has \emph{graded local units} if for any finite set of homogeneous elements  $\{x_{1}, \cdots, x_{n}\}\subseteq A$, there exists a homogeneous
idempotent $e\in A$ such that $\{x_{1}, \cdots, x_{n}\}\subseteq eAe$. Equivalently, $A$
has graded local units, if $A_0$ has local units and $A_0A_{n}=A_{n}A_0=A_{n}$ for every $n \in \Z$.

It is not difficult to see that for a strongly graded ring $A$, if a graded homomorphism $\phi: A\rightarrow B$  restricted to $A_0$ is injective, then $\phi$ is injective (see \cite[Corollary 1.3.9]{no}). However our results allows us to place this in the general framework of  the Graded Uniqueness Theorem for Cuntz--Pimsner rings (Theorem~\ref{gut}). 

\begin{cor}[Uniqueness theorem for strongly graded rings]\label{grcp2}
Let $A=\bigoplus_{i\in \Z}A_i$ be a strongly $\Z$-graded ring with graded local units and $B$ a $\Z$-graded ring. If $\phi:A \rightarrow B$ is a graded 
homomorphism such that $\phi|_{A_0}$ is injective then $\phi$ is injective. 
\end{cor}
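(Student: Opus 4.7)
The plan is to realize $A$ as a Cuntz--Pimsner ring via Corollary~\ref{grcp1} and then deduce injectivity of $\phi$ from the Graded Uniqueness Theorem (Theorem~\ref{gut}) applied to the composition of $\phi$ with the isomorphism $\mathcal{O}_{(A_{-1},A_1,\psi)}\to A$ provided by that corollary.

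First I would verify the three hypotheses of Corollary~\ref{grcp1}. Condition (1) is a straightforward induction from the strong grading: $A_n=A_{n-1}A_1=\cdots=A_1^n$ and symmetrically $A_{-n}=A_{-1}^n$ for $n>0$. For Condition (2), given a finite set $\{a_1,\dots,a_n\}\subseteq A_1$, the graded local units hypothesis yields a homogeneous idempotent $e\in A_0$ with $ea_l=a_l$ for every $l$; because strong grading forces $A_1A_{-1}=A_0$, such an $e$ automatically lies in $A_1A_{-1}$ as required. The symmetric statement for $\{b_1,\dots,b_m\}\subseteq A_{-1}$ is obtained in the same way using right local units. For Condition (3) I would actually prove the stronger fact that $\ann_{A_0}(A_1)=\{0\}$: if $x\in A_0$ satisfies $xA_1=0$, then $xA_0=xA_1A_{-1}=0$ by strong grading, and since $A_0$ has local units there exists $e\in A_0$ with $xe=x$, giving $x=0$. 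In particular the intersection with $\ann_{A_0}(A_1)^\bot$ is trivially $\{0\}$.

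Once these hypotheses are in place, Corollary~\ref{grcp1} supplies a graded isomorphism $\eta_A\colon\mathcal{O}_{(A_{-1},A_1,\psi)}\to A$ such that $\eta_A\circ\iota^{CP}_{A_0}$ is the inclusion $A_0\hookrightarrow A$. Then $\phi\circ\eta_A\colon\mathcal{O}_{(A_{-1},A_1,\psi)}\to B$ is a graded ring homomorphism satisfying
\[
(\phi\circ\eta_A)\bigl(\iota^{CP}_{A_0}(r)\bigr)=\phi(r)\neq 0\qquad\text{for every }r\in A_0\setminus\{0\},
\]
by the assumed injectivity of $\phi|_{A_0}$. Theorem~\ref{gut} then forces $\phi\circ\eta_A$ to be injective, and since $\eta_A$ is a bijection, $\phi$ itself is injective.

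I do not anticipate a serious obstacle: the argument is essentially an exercise in unpacking the hypotheses of Corollary~\ref{grcp1}, each of which becomes quite weak in the presence of strong grading together with graded local units. The two points worth highlighting are the use of $A_1A_{-1}=A_0$ (to place the local units inside $A_1A_{-1}$ and to verify Condition (3)) and the observation that $\eta_A\circ\iota^{CP}_{A_0}$ is the identity on $A_0$, which is what lets the injectivity hypothesis on $\phi|_{A_0}$ feed directly into Theorem~\ref{gut}.
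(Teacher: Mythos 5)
Your proof is correct and follows essentially the same route as the paper: verify Conditions (1)--(3) of Corollary~\ref{grcp1} using the strong grading and graded local units (including the observation that $\ann_{A_0}(A_1)=\{0\}$), obtain the graded isomorphism onto the Cuntz--Pimsner ring, and conclude by the Graded Uniqueness Theorem~\ref{gut}. You merely spell out a few details (the computation showing $\ann_{A_0}(A_1)=\{0\}$ and the explicit composition $\phi\circ\eta_A$) that the paper leaves implicit.
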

\begin{proof}

We will check conditions (1)--(3) in Corollary~\ref{grcp1}. Condition (1) follows from the definition of strongly graded rings. Since $A_1A_{-1}=A_0$ which contains local units for $A$, Condition (2) of Corollary~\ref{grcp1} is immediate. Observe that $\ann_{A_0}(A_1)=\{0\}$ and thus Condition (3) of Corollary~\ref{grcp1} follows. Thus there is a graded isomorphism from $\mathcal{O}_{(A_1, A_{-1},\psi)}$ to $A$, and the result now follows from Theorem~\ref{gut}. 
\end{proof}

Next we show how \cite[Example~5.7]{CO11}, \cite[Example~5.5]{CO11}, and \cite[Examples~1.10 and 5.8]{CO11} fit into  the framework of Theorem~\ref{mainth0} and Corollary~\ref{grcp1}.  First we consider corner skew Laurent polynomial rings, studied  in \cite{arabrucom}, where their $K_1$-groups were calculated. The construction is a special case of the so-called fractional skew monoid rings constructed in \cite{ara2004} (see also \cite[\S1.6.2]{hazi}). Corner skew Laurent polynomial rings are characterised by the following property: $A=\oplus_{n\in \mathbb Z}A_n$ is a $\mathbb Z$-graded ring and $A_1$ has a left invertible element (\cite[Lemma 2.4]{ara2004}, \cite[Theorem 1.6.9]{hazi}).  This characterisation is used in \cite[Example 1.6.14]{hazi} to show that Leavitt path algebras associated to finite graphs with no sinks are examples of such rings. Using our Corollary~\ref{grcp1}, in Example~\ref{boathouse11} we realise corner skew Laurent polynomial rings as a special case of Cuntz--Pimsner rings. In Example~\ref{crossed product}, we show how Corollary~\ref{grcp1} can be applied to crossed products by automorphisms. Finally, in Example~\ref{boathouse4} we show how Theorem~\ref{mainth0} can be applied to Leavitt path algebras of arbitrary graphs. In this last example the coefficient ring of our system is (in general) smaller than the zero graded part of the Leavitt path algebra, and so requires the full power of Theorem~\ref{mainth0} rather than just Corollary~\ref{grcp1}.

\begin{example}[Corner skew Laurent polynomial rings]
\label{boathouse11}
Let $R$ be a unital ring, $p\in R$ an idempotent, and $\alpha:R \rightarrow pRp$ a ring isomorphism. Recall from \cite{ara2004} that the \emph{corner skew Laurent polynomial ring} is the universal unital ring $R[t_{+},t_{-},\alpha]$ generated by elements $t_+$, $t_-$, and $\{\phi(r):r\in R\}$, where $\phi:R\rightarrow R[t_{+},t_{-},\alpha]$ is a unital ring homomorphism, satisfying the following relations
\begin{enumerate}
\item $t_- t_+=1_{R[t_+,t_-,\alpha]}$;
\item $t_+ t_-=\phi(p)$;
\item $\phi(r)t_-=t_-\phi(\alpha(r))$ for $r\in R$; and
\item $t_+\phi(r)=\phi(\alpha(r))t_+$ for $r\in R$.
\end{enumerate}
Observe that if $r\in R$ and $m,n\geq 1$, then 
\[
t_+^m\phi(r)t_-^n=t_+^{m-1}\phi(\alpha(r))t_+t_-^n=t_+^{m-1}\phi(\alpha(r)p)t_-^{n-1}=t_+^{m-1}\phi(\alpha(r))t_-^{n-1}
\]
and, because $t_- t_+=1_{R[t_+,t_-,\alpha]}$,
\begin{align*}
t_-^m\phi(r)t_+^n
&=t_-^{m-1}(t_-t_+)t_-\phi(r)t_+(t_-t_+)t_+^{n-1}\\
&=t_-^{m-1}t_-(t_+t_-)\phi(r)(t_+t_-)t_+t_+^{n-1}\\
&=t_-^{m-1}t_-\phi(prp)t_+^n\\
&=t_-^{m-1}\phi(\alpha^{-1}(prp))t_-t_+t_+^{n-1}\\
&=t_-^{m-1}\phi(\alpha^{-1}(prp))t_+^{n-1}.
\end{align*}
It follows that every element of $R[t_{+},t_{-},\alpha]$ can be written in the form
\[
\phi(r_n)t_+^n+\cdots +\phi(r_1)t_++\phi(r_0)+t_-\phi(r_{-1})+\cdots + t_-^m\phi(r_{-m})
\] 
for some $m,n\in \N_+$ and $r_i\in R$.
Furthermore, if we define
\[
A_n:=
\begin{cases}
t_-^n \phi(R) & \text{if $n>0$}\\
\phi(R) & \text{if $n=0$}\\
\phi(R)t_+^{-n} & \text{if $n<0$,}
\end{cases}
\]
then $\bigoplus_{n\in \Z}A_n$ is a $\Z$-grading of $R[t_{+},t_{-},\alpha]$. 

We now show that this example fits into our framework. Observe that 
\[
t_-=1_{R[t_+,t_-,\alpha]}t_-=(t_-t_+)t_-=t_-(t_+t_-)=t_-\phi(p)\in A_1
\]
and
\[
t_+=t_+1_{R[t_+,t_-,\alpha]}=t_+(t_- t_+)=(t_+t_-)t_+=\phi(p)t_+\in A_{-1}.
\]
It follows immediately that Condition (1) of Corollary~\ref{grcp1} holds. Furthermore, since
\[
1_{R[t_+,t_-,\alpha]}=t_- t_+\in A_1A_{-1}, 
\]
we see that Condition (2) of Corollary~\ref{grcp1} holds. Lastly, observe that if $\phi(r)\in \ann_{A_0}(A_1)$, then
\[
\phi(r)=\phi(r)1_{R[t_+,t_-,\alpha]}\in \phi(r)A_1A_{-1}=\{0\},
\]
and so we see that Condition (3) of Corollary~\ref{grcp1} holds.
Consequently, if $\psi: A_{-1} \otimes_{A_0} A_1 \rightarrow A_0$ is defined by $\psi(x \otimes_{A_0}y) =xy$, then there is a graded isomorphism from $R[t_{+},t_{-},\alpha]$ to the  Cuntz--Pimsner ring of the $A_0$-system $(A_{-1}, A_1,\psi)$. 
\end{example}

\begin{example}[Crossed products by automorphisms]
\label{crossed product}
By applying Corollary~\ref{grcp1}, we see that \cite[Example~5.5]{CO11}, mentioned in the introduction, fits into our framework as well.  Let $R$ be a ring with local units and $\varphi$ be an automorphism of $R$. Then the crossed product $A:= R \times_{\varphi} \mathbb{Z}$ is the universal ring generated by $\{[r,k] : r \in R \text{ and } k \in \mathbb{Z}\}$ satisfying $[r_1,k]+[r_2,k] =[r_1+r_2, k]$ and 
$[r_1,k_1][r_2,k_2] = [r_1\varphi^{k_1}(r_2),k_1+k_2]$. Then $A$ is a strongly $\mathbb{Z}$-graded ring with $A_n:= \{[r,n] : r \in R\}$ for each $n \in \mathbb{Z}$.  Consequently, it is straightforward to show that the hypotheses of Corollary~\ref{grcp1} are satisfied. Hence, $A$ is isomorphic to the Cuntz--Pimsner ring of the $A_0$-system $(A_{-1}, A_1, \psi)$ where $\psi: A_{-1} \otimes_{A_0} A_1 \to A_0$ is defined by
$\psi([r_1,-1] \otimes_{A_0} [r_2,1]) = [r_1\varphi^{-1}(r_2),0]$.
\end{example}

\begin{example}[Leavitt path algebras]
\label{boathouse4}
Let $E=(E^0, E^1, r,s)$ be a directed graph. Recall from \cite{AAS}  that the Leavitt path algebra $A:=L_K(E)$  over a unital commutative ring $K$ is generated by $\{v: v\in E^0\}\cup \{e: e\in E^1\}\cup \{e^*: e\in E^1\}$ subject to the following relations 
\begin{itemize}
\item[(1)] $uv=\delta_{u, v}u$ for $u, v\in E^0$;

\item[(2)] $r(e)e=e=es(e)$ for $e\in E^1$;

\item[(3)] $s(e)e^*=e^*=e^*r(e)$ for $e\in E^1$;

\item[(4)] $e^*f=\delta_{e, f}s(e)$ for $e, f\in E^1$;

\item[(5)] $\sum_{e\in r^{-1}(v)}ee^*=v$ if $v\in E^0$ with $r^{-1}(v)$ a finite nonempty set.
\end{itemize} Relations $(4)$ and $(5)$ are called the \emph{Cuntz--Krieger relations}. 
Note that we are using the `Southern Hemisphere' convention where a path is a finite sequence of edges $e_1 ... e_n$ such that $s(e_i) = r(e_{i+1}).$
$A$ is naturally $\Z$-graded with $|v|=0$, $|e|=1$ and $|e^*|=-1$ for $v\in E^0$ and $e\in E^1$.

We set $R:=\mathrm{span} \{v: v\in E^0\}$, $I:=\mathrm{span} \{e: e\in E^1\}$, and $J:=\mathrm{span} \{e^*: e\in E^1\}$. We apply  Theorem~\ref{mainth0} to show that there is a graded ring isomorphism from the Cuntz--Pimsner ring of the $R$-system $(J,I,\psi)$ to $A$. By relations (1)--(4) of $A$, $R$ is a subring of $L_K(E)$, $RI, IR\subseteq I$, $RJ, JR\subseteq J$, and $JI\subseteq R$. Thus Condition (1) of Theorem~\ref{mainth0} holds. Note that it is possible that $IJ$ is not contained in $R$. It is straightforward to show that $\ann_R(I)=\mathrm{span}\{v: vE^1=\emptyset\}$ and $\ann_R(I)^\bot=\mathrm{span}\{v:vE^1\neq\emptyset\}$, and so $\ann_R(I)\cap \ann_R(I)^\bot=\{0\}$. Thus, Condition (4) of Theorem~\ref{mainth0} is satisfied. 

Next we show that Condition (2) is satisfied. Let $\{i_1,\ldots, i_n\}\subseteq I$ be a finite set. For each $l\in \{1,\ldots, n\}$ there exist finite sets $F_l\subseteq E^1$ and $\{\lambda_e:e\in F_l\}\subseteq K$ such that $i_l=\sum_{e\in F_l} \lambda_e e$. Since $ee^*f=\delta_{e,f}f$ for any $e,f\in E^1$, if we set $a:=\sum_{e\in \cup_{l=1}^n F_l}ee^*\in IJ$, then $ai_l=i_l$ for each $l\in \{1,\ldots, n\}$. Similarly, if $\{j_1,\ldots, j_m\}\subseteq J$ is a finite set, we may write $j_l=\sum_{e\in F_l} \lambda_e e^*$ where $X_l\subseteq E^1$ and $\{\lambda_e:e\in X_l\}\subseteq K$ are finite sets. With $b:=\sum_{e\in \cup_{l=1}^m X_l}ee^*\in IJ$, we then have that $j_lb=j_l$ for each $l\in \{1,\ldots, m\}$.

It remains to check Condition (3) of Theorem~\ref{mainth0}. Let $r:=\sum_{v\in F}\lambda_v v\in \ann_R(I)^\bot$, $a:=\sum_{(e,f)\in G}\mu_{(e,f)}ef^*\in IJ$, where $F\subseteq E^0$, $G\subseteq E^1\times E^1$, $\{\lambda_v:v\in F\}$ and $\{\mu_{(e,f)}:(e,f)\in G\}\subseteq K$ are finite sets.  Suppose that $r-a\in \ann_{A_0}(I)$. Observe that for any $u,w\in E^0$,
\begin{equation}
\label{reducing to single vertex}
\delta_{u,w}\lambda_u u-\sum_{\substack{(e,f)\in G\\ r(e)=u,r(f)=w}}\mu_{(e,f)}ef^*=u(r-a)w\in \ann_{A_0}(I),
\end{equation} since $RI\subseteq I$ (where $\lambda_u:=0$ if $u\notin F$). Thus,
\[
\sum_{\substack{(e,f)\in G\\ r(e)\neq r(f)}}\mu_{(e,f)}ef^*, 
\sum_{\substack{(e,f)\in G\\ r(e)=r(f)\not\in F}}\mu_{(e,f)}ef^*
\in\ann_{A_0}(I).
\]
We claim that 
\begin{equation}
\label{distinct ranges}
\sum_{\substack{(e,f)\in G\\ r(e)\neq r(f)}}\mu_{(e,f)}ef^*=0
\end{equation}
and
\begin{equation}
\label{common range not in F}
\sum_{\substack{(e,f)\in G\\ r(e)=r(f)\not\in F}}\mu_{(e,f)}ef^*=0.
\end{equation}
For $k\in E^1\subseteq I$, we have
\[
0=\bigg(\sum_{\substack{(e,f)\in G\\ r(e)\neq r(f)}}\mu_{(e,f)}ef^*\bigg)k=\sum_{\substack{(e,k)\in G\\ r(e)\neq r(k)}}\mu_{(e,k)}e.
\]
Since the collection of edges $\{e:e\in E^1\}$ is a linearly independent set in $A$, this forces $\mu_{(e,k)}=0$ whenever $(e,k)\in G$ and $r(e)\neq r(k)$. This gives \eqref{distinct ranges}. Similar calculations show that \eqref{common range not in F} holds as well. Now suppose that $v\in F$ and $\lambda_v\neq 0$. By \eqref{reducing to single vertex}, 
\[
\lambda_v v-\sum_{\substack{(e,f)\in G\\ r(e)=r(f)=v}}\mu_{(e,f)}ef^*\in \ann_{A_0}(I).
\] 
We claim that  
\begin{enumerate}
\item[(i)] if $(e,f)\in G$ and $r(e)=r(f)=v$, then $\mu_{(e,f)}=\lambda_v$ when $e=f$ and $\mu_{(e, f)}$ is zero otherwise; and
\item[(ii)] if $e\in vE^1$ then $(e,e)\in G$ (and so in particular, $vE^1$ is a finite set). 
\end{enumerate}
For $k\in vE^1$, we have
\[
0=\bigg(\lambda_v v-\sum_{\substack{(e,f)\in G\\ r(e)=r(f)=v}}\mu_{(e,f)}ef^*\bigg)k=\lambda_vk-\sum_{\substack{(e,k)\in G\\ r(e)=v}}\mu_{(e,k)}e.
\]
Again since the edges $\{e: e\in E^1\}$ are linearly independent, we conclude (since $\lambda_v\neq 0$) that $(k,k)\in G$, $\lambda_v=\mu_{(k,k)}$, and $\mu_{(e,k)}=0$ whenever $e\in vE^1\setminus \{k\}$ and $(e,k)\in G$. Thus (i) and (ii) hold. 
Since $r\in \ann_R(I)^\bot$, we also know that $vE^1\neq \emptyset$, and so by relation (5) of $A$, we have
\begin{equation}
\label{applying the CK relation}
\sum_{\substack{(e,f)\in G\\ r(e)=r(f)=v}}\mu_{(e,f)}ef^*=\sum_{\substack{(e,e)\in G\\ r(e)=v}}\lambda_v ee^*=\sum_{e\in vE^1}\lambda_v ee^*=\lambda_v v.
\end{equation}
Therefore, 
\begin{align*}
a=\sum_{(e,f)\in G}\mu_{(e,f)}ef^*
&=\sum_{\substack{(e,f)\in G\\r(e)\neq r(f)}}\mu_{(e,f)}ef^*
+\sum_{\substack{(e,f)\in G\\r(e)=r(f)\not\in F}}\mu_{(e,f)}ef^*
\sum_{\substack{(e,f)\in G\\r(e)=r(f)\in F}}\mu_{(e,f)}ef^*\\
&=\sum_{v\in F}\sum_{\substack{(e,f)\in G\\ r(e)=r(f)=v}}\mu_{(e,f)}ef^*\\
&=\sum_{v\in F}\lambda_v v\in R, 
\end{align*}
where the penultimate equality comes from \eqref{distinct ranges} and \eqref{common range not in F}, and the final equality from \eqref{applying the CK relation}. Hence, we conclude that Condition (3) holds.
\end{example}

\section{Application to Steinberg algebras}
\label{Steinberg algebras}
Steinberg algebras were introduced in~\cite{St} in the context of discrete inverse semigroup algebras and independently in \cite{CFST} as a model for Leavitt path algebras.  Throughout this section, $K$ denotes an arbitrary field.
Let $\mG$ be an ample Hausdorff groupoid, that is, 
a topological groupoid whose topology is Hausdorff and has a base of compact open bisections. The Steinberg $K$-algebra of $\mG$ is the $K$-linear span of characteristic functions
$1_B:\mG \to K$ with $B$ a compact open bisection of $\mG$; see \cite[Lemma 3.3]{CFST}. Addition and scalar multiplication are pointwise and multiplication is given by convolution (which reduces to 
$1_B1_D = 1_{BD}$ for compact open bisections $B$ and $D$). For the basics on Steinberg algebras see \cite{CFST,St}.

We are interested in $\Z$-graded Steinberg algebras where the grading comes 
from a continuous cocycle $c$, that is,
a continuous homomorphism $c:\mG \to \mathbb{Z}$ (where $\mathbb{Z}$ has the discrete topology).  
The homogeneous components are given by 
\[A_K(\mG)_n := \{f \in A_K(\mG) : f(\gamma) \neq 0 \Longrightarrow c(\gamma)=n\}.\]

For any clopen set $H \subseteq \mG$, with some abuse of notation, we write
\[A_K(H) : = \{f \in A_K(\mG) : f(\gamma) = 0 \text{ for } \gamma \notin H\}.\]
Thus $A_K(H)$ consists of functions that can be written as a linear combination of characteristic functions associated to compact open bisections contained in $H$. With this convention, for each $n\in \Z$, let $\mathcal{G}_n:=c^{-1}(n)$ which is clopen subset of $\mG$ as $c$ is continuous and $\mathbb{Z}$ is discrete.  Then we have $A_K(\mG_n) = A_K(\mG)_n$ (see \cite{CFST,CE-M,CS2015} for more details).

With some moderate hypotheses, it is not hard to show how Corollary~\ref{grcp1} can be applied to Steinberg algebras.  This boils down to applying Theorem~\ref{mainth0} with $I := A_K(\mG)_1$, $J:=A_K(\mG)_{-1}$, and $R := A_K(\mG)_0$, giving us an algebraic analogue of \cite[Proposition~10]{RRS17} (see Corollary~\ref{cor:st}). This is the natural choice and we expect suffices in most situations.  
However, we can do things more generally and use smaller $I,J,R$, as the next Theorem shows. 

\begin{thm}
\label{Steinbergalgebras}
Let $\mathcal{G}$ be a locally compact Hausdorff ample groupoid and $c:\mathcal{G}\rightarrow \Z$ be a continuous cocycle.  Suppose we have clopen sets $H_0\subseteq \mathcal{G}_0$, $H_1\subseteq \mathcal{G}_1$, and $H_{-1}\subseteq \mathcal{G}_{-1}$. Define $R:=A_K(H_0)\subseteq A_K(\mathcal{G})_0$, $I:=A_K(H_1)\subseteq A_K(\mathcal{G})_1$, and $J:=A_K(H_{-1}) \subseteq A_K(\mathcal{G})_{-1}$.  

\begin{enumerate}[\upshape(i)]
\item
If $H_0$ is closed under multiplication, $H_0 H_1\cup H_1 H_0\subseteq H_1$, and $H_0 H_{-1}\cup H_{-1}H_0\subseteq H_{-1}$\footnote{This condition follows from the previous one if $H_{-1}=H_1^{-1}$ and $H_0$ is closed under taking inverses.}, $H_{-1}H_1\subseteq H_0$, and $r(H_1)\cup s(H_{-1})\subseteq H_1 H_{-1}$, then $R$ is a subring of $A_K(\mathcal{G})_0$ and Conditions (1), (2), and (3) of Theorem~\ref{mainth0} are satisfied. 

\medskip

\item
If $H_0$ and $H_1$ have the property that 
\begin{equation}
\label{hypothesis}
B\subseteq H_0 \text{ is a compact open bisection and } s(B)\cap r(H_1)=\emptyset \Longrightarrow s(B)\subseteq H_0,
\footnote{This condition is automatic if either $\mathcal{G}^{(0)}\subseteq H_0$, or if $H_0$ is closed under taking inverses and under multiplication (or in other words $H_0$ is a subgroupoid of $\mathcal{G}_0$).}\end{equation} then Condition (4) of Theorem~\ref{mainth0} is satisfied. 

\medskip

\item
If every element of $\mathcal{G}$ can be written as the product of elements from $H_0$, $H_1$, and $H_{-1}$, then $R,I,J$ generate $A_K(\mathcal{G})$ as a ring.
\end{enumerate}
If $H_0$, $H_1$, and $H_{-1}$ satisfy Conditions \rm{(i)}--\rm{(iii)}, then there is a graded algebra isomorphism from the Steinberg algebra $A_K(\mathcal{G})$ to the Cuntz--Pimsner ring of the $A_K(H_0)$-system $(A_K(H_1), A_K(H_{-1}),\psi)$. 
\end{thm}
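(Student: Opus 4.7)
The plan is to verify, in sequence, that under the hypotheses of (i) the Conditions (1)--(3) of Theorem~\ref{mainth0} hold, that under \eqref{hypothesis} Condition (4) holds, that under the factorisation hypothesis of (iii) the subring of $A_K(\mathcal{G})$ generated by $R \cup I \cup J$ is all of $A_K(\mathcal{G})$, and then to assemble these with Theorem~\ref{mainth0}. The two standing tools are that every element of $A_K(H)$ (for $H \subseteq \mathcal{G}$ clopen) is a $K$-linear combination of $1_B$ with $B \subseteq H$ a compact open bisection, and the convolution formula $1_B 1_D = 1_{BD}$.

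For (i), the closure of $H_0$ under multiplication combined with the convolution formula makes $R$ a subring, and the remaining multiplicative closures on $H_{\pm 1}$ give Condition (1) of Theorem~\ref{mainth0}. Since $H_{-1}H_1 \subseteq H_0$ yields $JI \subseteq R$, Condition (3) is automatic via the footnote in that theorem. The substantive step is Condition (2): given $i_1, \ldots, i_n \in I$, write each as a linear combination of characteristic functions of compact open bisections in $H_1$ and let $K_1$ be the compact union of their supports. Then $r(K_1) \subseteq r(H_1) \subseteq H_1 H_{-1}$, so for each $u \in r(K_1)$ the hypothesis produces compact open bisections $B_u \subseteq H_1$ and $C_u \subseteq H_{-1}$ with $u \in B_u C_u$; compactness plus a standard disjoint refinement (available in any ample Hausdorff groupoid, while keeping the factors inside $H_1$ and $H_{-1}$) yields finitely many pairwise disjoint products $B_j C_j \subseteq \mathcal{G}^{(0)}$ with $r(K_1) \subseteq U := \bigsqcup_j B_j C_j$, so that $1_U = \sum_j 1_{B_j} 1_{C_j} \in IJ$ and $1_U \cdot i_l = i_l$. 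The dual statement for $J$ uses $s(H_{-1}) \subseteq H_1 H_{-1}$ symmetrically.

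For (ii), I first identify $\ann_R(I) = \{f \in R : s(\supp f) \cap r(H_1) = \emptyset\}$ by testing $f$ against characteristic functions of compact open bisections in $H_1$. Given $f \in \ann_R(I) \cap \ann_R(I)^\perp$, write $f = \sum_j \lambda_j 1_{B_j}$ with the $B_j \subseteq H_0$ disjoint compact open bisections; each $s(B_j) \cap r(H_1) = \emptyset$, so \eqref{hypothesis} forces $s(B_j) \subseteq H_0$, whence $1_{s(B_j)} \in R$ and in fact lies in $\ann_R(I)$. The $\ann_R(I)^\perp$ condition then gives $f \cdot 1_{s(B_j)} = 0$, but the convolution isolates the term $\lambda_j 1_{B_j}$, forcing $\lambda_j = 0$ and hence $f = 0$. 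For (iii), the factorisation hypothesis combined with the basis of compact open bisections lets me refine any compact open bisection $B \subseteq \mathcal{G}$ into a finite disjoint union of products of compact open bisections, each factor lying in one of $H_0$, $H_1$, $H_{-1}$; so $1_B$ belongs to the subring generated by $R \cup I \cup J$, and since such $1_B$'s span $A_K(\mathcal{G})$, this subring is everything. Theorem~\ref{mainth0} then yields a graded isomorphism from $\mathcal{O}_{(J,I,\psi)}$ onto the subring generated by $R \cup I \cup J$, which by (iii) is all of $A_K(\mathcal{G})$.

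The main obstacle I anticipate is the disjoint-refinement step in (i) and (iii): the set-theoretic coverings by products $BC$ are supplied by the hypotheses, but producing a genuine sum of convolutions $1_{B_j} 1_{C_j}$ inside $A_K(\mathcal{G})$ requires the standard refinement available in ample Hausdorff groupoids together with care to keep each refined factor inside the designated $H_{\pm 1}$. A secondary subtlety, in (ii), is that the description of $\ann_R(I)$ requires $f$ to be supported inside $H_0$ (not merely inside $\mathcal{G}_0 = c^{-1}(0)$), which is exactly what makes hypothesis~\eqref{hypothesis} applicable after restricting to the source.
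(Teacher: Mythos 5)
There is a genuine gap, and it sits exactly at the one substantive point of part (i): your claim that ``since $H_{-1}H_1\subseteq H_0$ yields $JI\subseteq R$, Condition (3) is automatic via the footnote'' confuses $JI$ with $IJ$. The footnote to Theorem~\ref{mainth0} trivialises Condition (3) only when $IJ\subseteq R$, i.e.\ here when $H_1H_{-1}\subseteq H_0$, and that containment is \emph{not} among the hypotheses of (i) and genuinely fails in the intended examples: the hypotheses only demand $r(H_1)\cup s(H_{-1})\subseteq H_1H_{-1}$, and in the paper's later graph example one has $(ef,1,f)(f,-1,ef)=(ef,0,ef)\notin H_0$, so $IJ\not\subseteq R$ (the same phenomenon already occurs for Leavitt path algebras, which is precisely why Theorem~\ref{mainth0} carries Condition (3) as a nontrivial hypothesis rather than assuming $IJ\subseteq R$). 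Elements of $IJ=A_K(H_1)A_K(H_{-1})$ are supported in $H_1H_{-1}\subseteq \mathcal{G}_0$, not in $H_0$, so Condition (3) requires an argument: writing $r=\sum_i\alpha_i 1_{B_i}$ with $B_i\subseteq H_0$ and $a=\sum_j\beta_j 1_{C_j}$ with $C_1,\ldots,C_n\subseteq H_1H_{-1}$ disjoint compact open bisections, one must show each $C_j\subseteq\bigcup_i B_i\subseteq H_0$. The paper does this by supposing some $x\in C_l\setminus\bigcup_i B_i$, factoring $x=yz$ with $y\in H_1$, $z\in H_{-1}$, using $s(H_{-1})\subseteq H_1H_{-1}$ to produce $\xi\in H_1$ with $r(\xi)=s(x)$, and then evaluating $(r-a)1_D$ at $x\xi$ for a compact open bisection $D\subseteq H_1$ containing $\xi$; the bisection property of $D$ isolates the coefficient $\beta_l$ and gives the contradiction $\beta_l=0$. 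This whole step is absent from your proposal, and without it the proof of part (i) is incomplete.

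The remainder of your outline follows the paper's route and is essentially sound: Condition (1) is the routine convolution computation; Condition (2) is handled, as in the paper, by covering the relevant ranges (resp.\ sources) with disjoint compact open subsets of $r(H_1)\subseteq H_1H_{-1}$ and invoking the refinement fact that $C\subseteq D_1\cdots D_n$ implies $1_C\in A_K(D_1)\cdots A_K(D_n)$ (the paper isolates this as Lemma~\ref{generating lemma} and Remark~\ref{fixed order}, which is exactly the ``disjoint refinement'' you flag as the technical obstacle); your identification of $\ann_R(I)$ and the use of \eqref{hypothesis} to get Condition (4) matches the paper, with only the cosmetic difference that you test against each $1_{s(B_j)}$ separately while the paper tests against $1_D$ for $D=\bigcup_i s(B_i)$; and part (iii) is again Lemma~\ref{generating lemma}. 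So the fix needed is precisely to supply the Condition (3) argument sketched above rather than appealing to the footnote.
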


Before we give the proof of this theorem, we need the following lemma.

\begin{lemma}
\label{generating lemma}
Let $\mathcal{G}$ be a locally compact Hausdorff ample groupoid. If $C\subseteq \mathcal{G}$ is a compact open bisection and there exist clopen sets $D_1,\ldots,D_n\subseteq \mathcal{G}$ such that every element of $C$ can be written as the product of elements from $\cup_{i=1}^n D_i$, then $1_C$ belongs to the subring of $A_K(\mathcal{G})$ generated by $A_K(D_1),\ldots, A_K(D_n)$.
\begin{proof}
Let $C\subseteq \mathcal{G}$ be a compact open bisection. For $\gamma\in C$ write $\gamma=\mu_1\ldots \mu_{n_\gamma}$ where each $\mu_i\in D^i\in \{D_1,\ldots, D_n\}.$ Suppose that $n_\gamma>1$. For each $i\in\{1,\ldots, n_\gamma-1\}$ choose a compact open bisection $B_i^\gamma\subseteq D^i$ with $\mu_i\in B_i^\gamma$. Then $\mu_{n_\gamma}=\mu_{n_\gamma-1}^{-1}\ldots \mu_1^{-1}\gamma\in \big((B_{n_\gamma-1}^\gamma)^{-1}\ldots (B_{1}^\gamma)^{-1}C\big)\cap D^{n_\gamma}$, which is an open set because multiplication and inversion in $\mathcal{G}$ are open maps. Hence, we can choose a compact open bisection $B_{n_\gamma}^\gamma\subseteq \big((B_{n_\gamma-1}^\gamma)^{-1}\ldots (B_{1}^\gamma)^{-1}C\big)\cap D^{n_\gamma}$ containing $\mu_{n_\gamma}$. Thus, $\gamma=\mu_1\ldots \mu_{n_\gamma}\in B_1^\gamma \ldots B_{n_\gamma}^\gamma$ and $B_1^\gamma\ldots B_{n_\gamma}^\gamma\subseteq B_1^\gamma\ldots B_{n_\gamma-1}^\gamma(B_{n_\gamma-1}^\gamma)^{-1}\ldots (B_{1}^\gamma)^{-1}C\subseteq C$. (If $n_\gamma=1$, then $\gamma=\mu_1\in C\cap D^1$, and we can choose a compact open bisection $B_1^\gamma\subseteq C\cap D^1$ with $\gamma\in B_1^\gamma$.) Consequently, $\{B_1^\gamma\ldots B_{n_\gamma}^\gamma:\gamma\in C\}$ is an open cover for $C$ with $B_1^\gamma\ldots B_{n_\gamma}^\gamma\subseteq C$ for each $\gamma\in C$. Since $C$ is compact, it follows that there exist $\gamma_1,\ldots, \gamma_k\in C$ such that $C=\bigcup_{j=1}^k B_1^{\gamma_j}\ldots B_{n_{\gamma_j}}^{\gamma_j}$. If for each $j\in \{1,\ldots, k\}$ we define
\[
\widetilde{B}_{n_{\gamma_j}}^{\gamma_j}
:=
\begin{cases}
B_{n_{\gamma_j}}^{\gamma_j}\setminus 
\bigg(
\big(B_1^{\gamma_j}\ldots B_{n_{\gamma_j}-1}^{\gamma_j}\big)^{-1}
\Big(\bigcup_{i=j+1}^k B_1^{\gamma_i}\ldots B_{n_{\gamma_i}}^{\gamma_i}\Big)
\bigg)
& \text{if $n_{\gamma_j}>1$}\\
B_1^{\gamma_j}\setminus \Big(\bigcup_{i=j+1}^k B_1^{\gamma_i}\ldots B_{n_{\gamma_i}}^{\gamma_i}\Big)
&\text{if $n_{\gamma_j}=1$}
\end{cases},
\]
then $C=\bigcup_{j=1}^k B_1^{\gamma_j}\ldots B_{n_{\gamma_j}-1}^{\gamma_j}\widetilde{B}_{n_{\gamma_j}}^{\gamma_j}$ and this union is disjoint. Thus,
\[
1_C=\sum_{j=1}^k 1_{B_1^{\gamma_j}}\ldots 1_{B_{n_{\gamma_j}-1}^{\gamma_j}}1_{\widetilde{B}_{n_{\gamma_j}}^{\gamma_j}}.
\]
Since $B_i^{\gamma_j}$ is contained in one of $D_1,\ldots,D_n$, for each $j\in \{1,\ldots,k\}$ and $i\in \{1,\ldots, n_{\gamma_j}\}$, we see that $1_C$ is contained in the ring generated by $A_K(D_1),\ldots, A_K(D_n)$.
\end{proof}
\end{lemma}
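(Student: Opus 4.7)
The plan is to verify parts (i), (ii), and (iii) in turn, then combine them with Theorem~\ref{mainth0} to conclude. For part (i), $R$ is a subring of $A_K(\mathcal{G})_0$ because $H_0 H_0\subseteq H_0$ gives $1_B\cdot 1_{B'}=1_{BB'}\in R$ for compact open bisections $B,B'\subseteq H_0$, and Condition (1) of Theorem~\ref{mainth0} translates directly into the assumed multiplicative containments on $H_0$, $H_1$, $H_{-1}$. For Condition (2), I would write each $i_l$ in a given finite set $\{i_1,\ldots,i_n\}\subseteq I$ as a finite linear combination of $1_B$ for compact open bisections $B\subseteq H_1$, and let $E$ be the union of the ranges of these $B$'s---a compact open subset of $\mathcal{G}^{(0)}$ contained in $r(H_1)\subseteq H_1 H_{-1}$. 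Lemma~\ref{generating lemma} applied with $D_1=H_1$ and $D_2=H_{-1}$, using that each $\gamma\in E$ decomposes as a length-two product with factors from $H_1$ then $H_{-1}$, exhibits $1_E$ as a sum of products $1_C 1_D$ with $C\subseteq H_1$ and $D\subseteq H_{-1}$, so $1_E\in IJ$; convolution then gives $1_E\cdot i_l=i_l$. The corresponding element for finite subsets of $J$ is built symmetrically from $s(H_{-1})\subseteq H_1 H_{-1}$.

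For Condition (3), suppose $r\in \ann_R(I)^{\bot}$, $a\in IJ$, and $r-a\in \ann_{A_0}(I)$. Testing $(r-a)\cdot 1_B=0$ against arbitrarily small compact open bisections $B\subseteq H_1$ and using that $B$ is a bisection to reduce each convolution sum to a single term yields $(r-a)(\alpha)=0$ for every $\alpha$ with $s(\alpha)\in r(H_1)$. The key observation is that $s(H_{-1})\subseteq r(H_1)$: for $\delta\in s(H_{-1})$ the hypothesis $s(H_{-1})\subseteq H_1 H_{-1}$ gives $\delta=\mu\nu$ with $\mu\in H_1$ and $\nu\in H_{-1}$, and because $\delta$ is a unit, a short groupoid calculation forces $\nu=\mu^{-1}$ and $\delta=r(\mu)\in r(H_1)$. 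Now for any $\gamma\in\supp(a)\subseteq H_1 H_{-1}$, writing $\gamma=\mu\nu$ gives $s(\gamma)=s(\nu)\in s(H_{-1})\subseteq r(H_1)$, so $(r-a)(\gamma)=0$; thus $r(\gamma)=a(\gamma)\neq 0$, forcing $\gamma\in \supp(r)\subseteq H_0$ and hence $a\in R$.

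For part (ii), I first observe that a compact open bisection $C\subseteq H_0$ satisfies $1_C\in \ann_R(I)$ if and only if $s(C)\cap r(H_1)=\emptyset$, because $1_C\cdot 1_B=1_{CB}$ is nonzero exactly when some source of $C$ equals some range of $B$. Given $f\in \ann_R(I)\cap \ann_R(I)^{\bot}$, decompose $f=\sum_k \lambda_k 1_{B_k}$ over pairwise disjoint compact open bisections $B_k\subseteq H_0$ with $s(B_k)\cap r(H_1)=\emptyset$. The hypothesis in (ii) then gives $s(B_k)\subseteq H_0$, so $1_{s(B_k)}\in R$, and $1_{s(B_k)}\in \ann_R(I)$ since its source $s(B_k)$ again misses $r(H_1)$. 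Because $f\in \ann_R(I)^{\bot}$ we have $f\cdot 1_{s(B_k)}=0$; evaluating at any $\gamma\in B_k$ yields $\lambda_k=0$, so $f=0$.

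Part (iii) is immediate from Lemma~\ref{generating lemma} applied to each compact open bisection $C\subseteq \mathcal{G}$ with $D_1=H_0$, $D_2=H_1$, $D_3=H_{-1}$: the hypothesis of (iii) says every element of $C$ is a product from $H_0\cup H_1\cup H_{-1}$, so $1_C$ lies in the subring generated by $R$, $I$, $J$; since such $1_C$ span $A_K(\mathcal{G})$, we obtain the required generation. Combining (i)--(iii), Theorem~\ref{mainth0} produces a graded isomorphism from $\mathcal{O}_{(J,I,\psi)}$ onto the subring of $A_K(\mathcal{G})$ generated by $R$, $I$, $J$, which by part (iii) equals all of $A_K(\mathcal{G})$. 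The main obstacle is the support analysis in Condition (3): recognising that the abstract hypothesis $r(H_1)\cup s(H_{-1})\subseteq H_1 H_{-1}$ is engineered precisely to force the usable inclusion $s(H_{-1})\subseteq r(H_1)$ via the unit/inverse calculation is what lets the vanishing statement about $r-a$ reach every point of $\supp(a)$.
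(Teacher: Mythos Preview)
Your proposal does not prove the stated lemma at all: you have written a proof of Theorem~\ref{Steinbergalgebras} (the result with parts (i), (ii), (iii) about the clopen sets $H_0,H_1,H_{-1}$), and throughout you \emph{invoke} Lemma~\ref{generating lemma} as an established tool rather than proving it. The statement you were asked to prove is the generating lemma itself: given a compact open bisection $C$ whose elements factor through $\cup_i D_i$, show that $1_C$ lies in the subring generated by the $A_K(D_i)$.

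What is missing, therefore, is the entire content of the lemma. A correct proof must (a) for each $\gamma\in C$ build compact open bisections $B_i^\gamma\subseteq D^i$ with $\gamma\in B_1^\gamma\cdots B_{n_\gamma}^\gamma\subseteq C$ (the subtle point being to ensure the product lands back inside $C$, which the paper handles by choosing the last factor inside $(B_{n_\gamma-1}^\gamma)^{-1}\cdots(B_1^\gamma)^{-1}C\cap D^{n_\gamma}$), (b) pass to a finite subcover by compactness, and (c) disjointify so that $1_C$ becomes an honest sum of products $1_{B_1}\cdots 1_{B_m}$ with each $B_i$ inside some $D_j$. None of these steps appears in your write-up. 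Your argument, while plausibly a sketch of Theorem~\ref{Steinbergalgebras}, is circular as a proof of the lemma since it repeatedly appeals to the lemma itself.
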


\begin{rmk}
\label{fixed order}
Observe that in the proof of Lemma~\ref{generating lemma} we showed that if $C\subseteq D_1\cdots D_n$, then $1_C\in A_K(D_1)\cdots A_K(D_n)$.
\end{rmk}

\begin{proof}[Proof of Theorem~\ref{Steinbergalgebras}]
It is routine to check that if $H_0$ is closed under multiplication, then $R$ is a closed under multiplication. Clearly, $R$, $I$, and $J$ are closed under addition, and so $R$ is a subring of $A_K(\mathcal{G})_0$ and $I$ and $J$ are (additive) subgroups of $A_K(\mathcal{G})_1$ and $A_K(\mathcal{G})_{-1}$ respectively. It is routine to check that if $H_0 H_1\cup H_1 H_0\subseteq H_1$, $H_0 H_{-1}\cup H_{-1}H_0\subseteq H_{-1}$, and $H_{-1}H_1\subseteq H_0$, then $RI,IR\subseteq I$, $RJ,JR\subseteq J$, and $JI\subseteq R$. Thus, Condition (1) of Theorem~\ref{mainth0} is satisfied. 

Now suppose that $r(H_1)\cup s(H_{-1})\subseteq H_1 H_{-1}$. We will show that Condition (2) of Theorem~\ref{mainth0} is satisfied. Let $\{f_1,\ldots, f_n\}\subseteq A(H_1)$ be a finite set. For each $i\in \{1,\ldots, n\}$ choose compact open bisections $B^i_1,\ldots, B_{m_{i}}^i\subseteq H_1$ and scalars $\alpha^i_1,\ldots, \alpha^i_{m_i}\in K$ such that $f_i=\sum_{l=1}^{m_i}\alpha^i_l1_{B^i_l}$. Now choose compact open bisections $\{C_{l_i}^i\subseteq H_1:i\in \{1,\ldots, n\}, l_i\in \{1,\ldots, m_i\}\}$ such that the sets $r(C_{l_i}^i)$ where $i\in \{1,\ldots, n\}$ and $l_i\in \{1,\ldots, m_i\}$ are mutually disjoint and $\bigcup_{i=1}^n\bigcup_{l_i=1}^{m_i}r(B_{l_i}^i)= \bigcup_{i=1}^n\bigcup_{l_i=1}^{m_i}r(C_{l_i}^i)$. Note: one way to do this is to set 
\[
C_{l_i}^i:=B_{l_i}^i\setminus r^{-1}\Bigg(\bigcup_{s=i+1}^n \bigcup_{t=1}^{m_s} r(B_t^s)\cup\bigcup_{p=l_i+1}^{m_i}r(B^i_p) \Bigg)
\]
for each $i\in \{1,\ldots, n\}$ and $l_i\in \{1,\ldots, m_i\}$. We then define 
\[
r:=\sum_{i=1}^n\sum_{l_i=1}^{m_i}1_{r(C_{l_i}^i)}. 
\]
Since $r(C_{l_i}^i)\subseteq r(H_1)\subseteq H_1 H_{-1}$ for each $i\in \{1,\ldots, n\}$ and $l_i\in \{1,\ldots, m_i\}$, Lemma~\ref{generating lemma} (and Remark~\ref{fixed order}), tells us that $r\in IJ$. Hence for any $k\in \{1,\ldots, n\}$,
\begin{align*}
rf_k
=\sum_{i=1}^n\sum_{l_i=1}^{m_i} 1_{r(C_{l_i}^i)}\Big(\sum_{j=1}^{m_k}\alpha^k_j1_{B^k_j}\Big)
=\sum_{i=1}^n\sum_{l_i=1}^{m_i}\sum_{j=1}^{m_k} \alpha^k_j1_{r(C_{l_i}^i)B^k_j}
=\sum_{j=1}^{m_k} \alpha^k_j1_{B^k_j}
=f_k,
\end{align*}
where the third equality follows from the fact that 
\[r(B^k_j)\subseteq \bigcup_{i=1}^n\bigcup_{l_i=1}^{m_i}r(B_{l_i}^i)=\bigcup_{i=1}^n\bigcup_{l_i=1}^{m_i}r(C_{l_i}^i)\] for each $j\in \{1,\ldots, m_k\}$, and this last union is, by construction, disjoint. The proof that the second part of Condition (2) is satisfied is almost exactly the same, but we use sources instead of ranges. 

Now we check that Condition (3) of Theorem~\ref{mainth0} is satisfied. Let $r\in R$ and $a\in IJ$ be such that $r-a\in \ann_{A_0}(I)$. Write $r=\sum_{i=1}^m\alpha_i 1_{B_i}$ and $a=\sum_{j=1}^n\beta_j 1_{C_j}$ where $\alpha_1,\ldots, \alpha_m,\beta_1,\ldots,\beta_n\in K\setminus\{0\}$ and $B_1,\ldots,B_m\subseteq H_0$ are compact open bisections and $C_1,\ldots, C_n\subseteq H_1H_{-1}$ are disjoint compact open bisections. Looking for a contradiction, suppose that there exists $l\in \{1,\ldots, m\}$ such that $C_l\not\subseteq \cup_{i=1}^n B_i$. Choose $x\in C_l\setminus \cup_{i=1}^n B_i$ and write $x=yz$ where $y\in H_1$ and $z\in H_{-1}$. Since $s(x)=s(z)\in s(H_{-1})$, if we assume that $r(H_1)\cup s(H_{-1})\subseteq H_1 H_{-1}$, then we can choose $\xi\in H_1$ and $\sigma\in H_{-1}$ such that $s(x)=\xi\sigma$. Thus, $s(x)=r(\xi)$. Let $D\subseteq H_1$ be a compact open bisection containing $\xi$. Then $1_D\in I$ and so 
\[
0=(r-a)1_D=\sum_{i=1}^m\alpha_i 1_{B_iD}-\sum_{j=1}^n\beta_j 1_{C_jD}.
\]
By construction $x\xi\in C_lD$. If $x\xi\in C_kD$ for some $k\in \{1,\ldots, n\}\setminus \{l\}$, say $x\xi=\mu\nu$ where $\mu\in C_k$ and $\nu\in D$, then $s(\nu)=s(\xi)$ forces $\nu=\xi$ because $D$ is a bisection, and so $\mu=x$ which is impossible because $C_l\cap C_k=\emptyset$. Similarly, if $x\xi\in B_iD$ for some $i\in \{1,\ldots, m\}$, say $x\xi=\eta\tau$ where $\eta\in B_i$ and $\tau\in D$, then $s(\tau)=s(\xi)$ forces $\tau=\xi$, and so $x=\eta$ which is impossible because $x\in C_l\setminus \cup_{i=1}^n B_i$. Thus, 
\[
0=\Big(\sum_{i=1}^m\alpha_i 1_{B_iD}-\sum_{j=1}^n\beta_j 1_{C_jD}\Big)(x\xi)=\beta_l\neq 0,
\]
which is obviously not possible. Hence, $C_j\subseteq \cup_{i=1}^n B_i\subseteq H_0$ for each $j\in \{1,\ldots,m\}$, and so $a=\sum_{j=1}^n\beta_j 1_{C_j}\in A_K(H_0)=R$ as required. We have now shown that part (i) of the theorem holds. 

Before we check that part (ii) of the theorem holds, we get a handle on $\ann_R(I)$. We claim that
\begin{equation}
\label{calculating the annihilator}
\ann_R(I)=\mathrm{span}_K\{1_B: B\subseteq H_0 \text{ is a compact open bisection and } s(B)\cap r(H_1)=\emptyset\}. 
\end{equation}
Firstly, suppose that $B\subseteq H_0$ is a compact open bisection and $s(B)\cap r(H_1)=\emptyset$. Clearly, $1_B\in R$. Let $f\in I$, say $f=\sum_{i=1}^n \alpha_i 1_{D_i}$ where $\alpha_1,\ldots, \alpha_n\in K$ are some scalars and $D_1,\ldots, D_n\subseteq H_1$ are compact open bisections. Since $r(D_i)\cap s(B)\subseteq r(H_1)\cap s(B)=\emptyset$ for each $i\in \{1,\ldots,n\}$, we have
\[
1_Bf=\sum_{i=1}^n\alpha_i1_{BD_i}=0.
\]
Thus, $1_B\in \ann_R(I)$, and we conclude that 
\[
\mathrm{span}_K\{1_B: B\subseteq H_0 \text{ is a compact open bisection and } s(B)\cap r(H_1)=\emptyset\}\subseteq \ann_R(I).
\] 
For the reverse containment, let $f:=\sum_{i=1}^n \alpha_i1_{D_i}\in R$ (where $\alpha_1,\ldots, \alpha_n\in K\setminus\{0\}$ are scalars and $D_1,\ldots, D_n\subseteq H_0$ are mutually disjoint compact open bisections), and suppose that there exists $j\in \{1,\ldots,n\}$ such that $s(D_j)\cap r(H_1)\neq \emptyset$. Choose $x\in H_1$ and $y\in D_j$ such that $r(x)=s(y)$ and let $E\subseteq H_1$ be a compact open bisection containing $x$ (we can certainly do this --- if we pick a compact open bisection in $\mathcal{G}$ containing $x$ then its intersection with $H_1$ is still compact and open because $H_1$ is clopen). Looking for a contradiction, suppose there exists $k\in \{1,\ldots, n\}\setminus \{j\}$ such that $yx\in D_kE$. Thus, $yx=wz$ for some $w\in D_k$ and $z\in E$. Since $s(x)=s(z)$ and $x,z\in E$ which is a bisection, we must have $x=z$. Thus, $y=w$, which is impossible because $D_j$ and $D_k$ are disjoint. Hence,
\[
(f1_E)(yx)=\sum_{i=1}^n \alpha_i 1_{D_iE}(yx)=\alpha_j\neq 0.
\]
Thus, $f1_E\neq 0$, and so $f\not\in \ann_R(I)$. This completes the proof that \eqref{calculating the annihilator} holds. 

We now show that Condition (4) holds provided we have \eqref{hypothesis}. Suppose $f\in \ann_R(I)\cap \ann_R(I)^\bot$. By \eqref{calculating the annihilator}, $f=\sum_{i=1}^n \alpha_i 1_{B_i}$ for some choice of scalars $\alpha_1,\ldots, \alpha_n\in K$ and compact open bisections $B_1,\ldots, B_n\subseteq H_0$ with $s(B_i)\cap r(H_1)=\emptyset$ for each $i\in \{1,\ldots, n\}$. Let $D:=\bigcup_{i=1}^n s(B_i)$. Then $D$ is compact and open (because $s$ is a local homeomorphism, it is a continuous open map) and a bisection (because it is a subset of the unit space). Moreover, $s(D)=D=\bigcup_{i=1}^n s(B_i)$ is disjoint from $r(H_1)$, and so is contained in $H_0$ by \eqref{hypothesis}. Hence, by \eqref{calculating the annihilator}, $1_D\in  \ann_R(I)$, and we must have
\[
0=f1_D=\sum_{i=1}^n \alpha_i 1_{B_iD}=\sum_{i=1}^n \alpha_i 1_{B_i}=f.
\]
Thus, $\ann_R(I)\cap \ann_R(I)^\bot=\{0\}$ as required. Thus, part (ii) of the theorem holds. 

Finally, another application of Lemma~\ref{generating lemma} shows that if every element of $\mathcal{G}$ can be written as the product of elements from $H_0$, $H_1$, and $H_{-1}$, then $R=A_K(H_0)$, $I=A_K(H_0)$, and $J=A_K(H_{-1})$ generate $A_K(\mathcal{G})$ as a ring. This shows that part (iii) of the theorem holds.
\end{proof}

\begin{rmk}
It is not immediately obvious whether there exists a groupoid $\mathcal{G}$ satisfying the hypotheses of Theorem~\ref{Steinbergalgebras} with clopen sets $H_0\subseteq \mathcal{G}_0$, $H_1\subseteq \mathcal{G}_1$, $H_{-1}\subseteq \mathcal{G}_{-1}$ satisfying conditions (i)--(iii) of the same Theorem in which $H_{-1}\neq (H_1)^{-1}$. We can show that if $H_0$ is also closed under taking inverses (i.e. it is a subgroupoid), then $H_{-1}= (H_1)^{-1}$. Let $\gamma\in H_{-1}$. Since $s(H_{-1})\subseteq H_1 H_{-1}$, we have that $s(\gamma)=\mu\nu$ for some $\mu\in H_1$ and $\nu\in H_{-1}$. Thus, $\mu\nu=s(\gamma)=s(\nu)=\nu^{-1}\nu$, which forces $\mu=\nu^{-1}$ (and so $\mu^{-1}=\nu\in (H_1)^{-1}\cap H_{-1}$). Hence, $\gamma=\gamma s(\gamma)=\gamma \mu \mu^{-1}\in H_{-1}H_1(H_1)^{-1}\subseteq H_0 (H_1)^{-1}=(H_1(H_0)^{-1})^{-1}\subseteq (H_1)^{-1}$. Thus, $H_{-1}\subseteq (H_1)^{-1}$. Similar working using the fact that $r(H_1)\subseteq H_1 H_{-1}$ shows that $H_1\subseteq (H_{-1})^{-1}$. Hence, $H_{-1}=(H_1)^{-1}$. However, as the next example shows, if $H_0$ is not closed under taking inverses, then $H_{-1}$ need not equal $(H_1)^{-1}$.
\end{rmk}

\begin{example}
Let $E$ be the directed graph with vertex set $E^0=\{u,v,w\}$, edge set $E^1=\{e,f,g\}$, and range and source maps determined by $r(e)=u$, $s(e)=r(f)=r(g)=w$, and $s(f)=s(g)=v$.  
Then the boundary path groupoid $\mathcal{G}_E$ is a locally compact Hausdorff ample groupoid.    See for example \cite[Example~2.1]{CS2015} for the details.  For this graph, the usual topology on $\mathcal{G}_E$ is discrete.  The map $c:\mathcal{G}_E\rightarrow \Z$ given by $c(x,m,y)=m$ for all $(x,m,y)\in \mathcal{G}_E$ is a continuous cocycle. Hence, 
\begin{align*}
H_0 & := \{(g,0,g), (eg,0,eg),(v,0,v),(f,0,f),(f,0,g)\},\\ 
H_1 & := \{(ef,1,f),(ef,1,g),(eg,1,g),(eg,1,f),(f,1,v),(g,1,v)\},\\
H_{-1} & :=\{(f,-1,ef),(f,-1,eg),(v,-1,f),(v,-1,g)\}
\end{align*}
are clopen subsets of $(\mathcal{G}_E)_0$, $(\mathcal{G}_E)_1$, and $(\mathcal{G}_E)_{-1}$ respectively. It is then straightforward to check that $H_0$ is closed under multiplication, $H_0 H_1\cup H_1 H_0\subseteq H_1$, $H_0 H_{-1}\cup H_{-1}H_0\subseteq H_{-1}$, $H_{-1}H_1\subseteq H_0$, and $r(H_1)\cup s(H_{-1})\subseteq H_1 H_{-1}$. Furthermore, $H_0$ and $H_1$ satisfy Condition~\ref{hypothesis} (in fact $s(H_0)\subseteq H_0$), and every element of $\mathcal{G}_E$ can be written as the product of elements from $H_0\cup H_1\cup H_{-1}$. However, since $(g,-1,ef)=(ef,1,g)^{-1}$ (and $(g,-1,eg)=(eg,1,g)^{-1}$) is not in $H_{-1}$, we see that $(H_1)^{-1}\not\subseteq H_{-1}$. 
\end{example}

Finally we use Theorem~\ref{Steinbergalgebras} to show that the Steinberg algebra associated to an \emph{unperforated} $\Z$-graded groupoid $\mathcal{G}$  can be realised as the Cuntz--Pimsner ring of an $A_K(\mathcal{G}_0)$-system. This is an algebraic analogue of \cite[Proposition~10]{RRS17}, which shows that under similar hypotheses, the reduced groupoid $C^*$-algebra $C_r^*(\mathcal{G})$ associated to an (\'{e}tale) groupoid can be realised as the Cuntz--Pimsner algebra of a $C^*$-correspondence over $C_r^*(\mathcal{G}_0)$.

\begin{cor}
\label{cor:st}
Let $\mathcal{G}$ be a locally compact Hausdorff ample groupoid and $c:\mathcal{G}\rightarrow \Z$ be a continuous cocycle. Suppose that $c$ is unperforated in the sense that if $n>0$ and $g\in \mathcal{G}_n$, then there exist $g_1,\ldots, g_n\in \mathcal{G}_1$ such that $g=g_1\cdots g_n$. Then with $H_0:=\mathcal{G}_0$, $H_1:=\mathcal{G}_1$, and $H_{-1}=\mG_{-1}$ the conditions of  Theorem~\ref{Steinbergalgebras} are satisfied. Consequently, there is a graded algebra isomorphism from $A_K(\mathcal{G})$ to the Cuntz--Pimsner ring of the $A_K(\mathcal{G}_0)$-system $(A_K(\mathcal{G})_{-1}, A_K(\mathcal{G})_1,\psi)$. 
\end{cor}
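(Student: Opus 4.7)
The proof reduces to verifying that, with $H_0 = \mathcal{G}_0$, $H_1 = \mathcal{G}_1$, and $H_{-1} = \mathcal{G}_{-1}$, the three conditions (i)--(iii) of Theorem~\ref{Steinbergalgebras} hold. Once this is done, the conclusion follows immediately from Theorem~\ref{Steinbergalgebras}.

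For condition (i), the plan is to observe that since $c$ is a homomorphism, whenever $g, h \in \mathcal{G}$ are composable we have $c(gh) = c(g) + c(h)$. This immediately gives $\mathcal{G}_0 \mathcal{G}_0 \subseteq \mathcal{G}_0$ (so $H_0$ is closed under multiplication), $\mathcal{G}_0 \mathcal{G}_1 \cup \mathcal{G}_1 \mathcal{G}_0 \subseteq \mathcal{G}_1$, $\mathcal{G}_0 \mathcal{G}_{-1} \cup \mathcal{G}_{-1} \mathcal{G}_0 \subseteq \mathcal{G}_{-1}$, and $\mathcal{G}_{-1} \mathcal{G}_1 \subseteq \mathcal{G}_0$. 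For the final part of (i), note that for $g \in \mathcal{G}_1$ we may write $r(g) = g g^{-1}$ with $g \in H_1$ and $g^{-1} \in H_{-1}$ (since $c(g^{-1}) = -c(g) = -1$), hence $r(H_1) \subseteq H_1 H_{-1}$; symmetrically, $s(H_{-1}) \subseteq H_1 H_{-1}$.

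For condition (ii), I plan to invoke the footnote of Theorem~\ref{Steinbergalgebras}: it suffices to show $\mathcal{G}^{(0)} \subseteq H_0$. This is automatic because every unit $u \in \mathcal{G}^{(0)}$ satisfies $uu = u$, so $c(u) = c(u) + c(u)$ forces $c(u) = 0$, i.e., $u \in \mathcal{G}_0 = H_0$.

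For condition (iii), this is where the unperforated hypothesis on $c$ enters. The plan is to take an arbitrary $g \in \mathcal{G}$ and split into cases according to the sign of $c(g)$. If $c(g) = 0$ then $g \in H_0$. If $c(g) = n > 0$ then unperforatedness yields $g = g_1 \cdots g_n$ with each $g_i \in \mathcal{G}_1 = H_1$. If $c(g) = -n < 0$, then $g^{-1} \in \mathcal{G}_n$, so apply unperforatedness to $g^{-1} = h_1 \cdots h_n$ with $h_i \in \mathcal{G}_1$, and then $g = h_n^{-1} \cdots h_1^{-1}$ with each $h_i^{-1} \in \mathcal{G}_{-1} = H_{-1}$. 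Thus every element of $\mathcal{G}$ is a product from $H_0 \cup H_1 \cup H_{-1}$.

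There is no real obstacle here: the corollary is essentially a specialization of Theorem~\ref{Steinbergalgebras}, and the unperforated hypothesis is tailored precisely to supply condition (iii). Once (i)--(iii) are verified, Theorem~\ref{Steinbergalgebras} produces the desired graded algebra isomorphism from $A_K(\mathcal{G})$ onto the Cuntz--Pimsner ring of the $A_K(\mathcal{G}_0)$-system $(A_K(\mathcal{G})_1, A_K(\mathcal{G})_{-1}, \psi)$, where $\psi$ is induced by multiplication in $A_K(\mathcal{G})$.
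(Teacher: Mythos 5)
Your proposal is correct and follows exactly the route the paper intends: the corollary is a direct specialization of Theorem~\ref{Steinbergalgebras}, with the cocycle property giving condition (i), the inclusion $\mathcal{G}^{(0)}\subseteq \mathcal{G}_0$ giving condition (ii) via the footnote, and unperforatedness (plus inversion for negative degrees) giving condition (iii). The only quibble is notational: with the paper's convention $\psi\colon J\otimes_R I\to R$, the system should be written $(A_K(\mathcal{G})_{-1}, A_K(\mathcal{G})_1,\psi)$ as in the corollary's statement, rather than with the factors in the reversed order as in your last line.
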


\end{document}